\newtheorem{theorem}{Theorem}[section]
\newtheorem{lemma}[theorem]{Lemma}
\newtheorem{corollary}[theorem]{Corollary}
\newtheorem{question}[theorem]{Question}
\newtheorem*{claim}{Claim}
\theoremstyle{definition}
\theoremstyle{remark}
\newcommand{\Q}{\mathbb{Q}}
\newcommand{\R}{\mathbb{R}}
\newcommand{\CC}{\mathbb{C}}
\newcommand{\D}{\mathcal{D}}
\newcommand{\F}{\mathcal{F}}
\newcommand{\G}{\mathcal{G}}
\renewcommand{\P}{\mathcal{P}}
\newcommand{\U}{\mathcal{U}}
\newcommand{\explicitSet}[1]{\left\lbrace #1 \right\rbrace}
\newcommand{\brackets}[1]{\left\langle #1 \right\rangle}
\newcommand{\set}[2]{\explicitSet{#1 \colon #2}}
\newcommand{\seq}[2]{\brackets{#1 \colon #2}}
\newcommand{\<}{\langle}
\renewcommand{\>}{\rangle}
\renewcommand{\a}{\alpha}
\renewcommand{\b}{\beta}
\newcommand{\g}{\gamma}
\newcommand{\dlt}{\delta}
\renewcommand{\k}{\kappa}
\newcommand{\s}{\sigma}
\newcommand{\w}{\omega}
\newcommand{\sub}{\subseteq}
\newcommand{\rest}{\!\restriction\!}
\newcommand{\cat}{\!\,^{\frown}}
\newcommand{\card}[1]{\left\lvert #1 \right\rvert}
\newcommand{\PP}{\mathbb{P}}
\newcommand{\forces}{\Vdash}
\newcommand{\BB}{\mathbb{B}}
\renewcommand{\SS}{\mathbb{S}}
\newcommand{\continuum}{\mathfrak{c}}
\newcommand{\dom}{\mathfrak d}
\newcommand{\bdd}{\mathfrak b}
\newcommand{\ch}{\ensuremath{\mathsf{CH}}\xspace}
\newcommand{\gch}{\ensuremath{\mathsf{GCH}}\xspace}
\newcommand{\zfc}{\ensuremath{\mathsf{ZFC}}\xspace}
\newcommand{\MH}{\ensuremath{\mathsf{MH}}\xspace}
\newcommand{\embeds}{<\hspace{-2.5mm} \cdot \hspace{1.5mm}}
\begin{document}

\title{On Roitman's principles $\mathsf{MH}$ and $\Delta$}
\author{Hector Barriga-Acosta}
\address {
H. Barriga-Acosta\\
Department of Mathematics and Statistics\\
University of North Carolina at Charlotte\\
Charlotte, NC 28223, USA}
\email{hbarriga@charlotte.edu}

\author{Will Brian}
\address {
W. R. Brian\\
Department of Mathematics and Statistics\\
University of North Carolina at Charlotte\\
Charlotte, NC 28223, USA}
\email{wbrian.math@gmail.com}
\urladdr{wrbrian.wordpress.com}

\author{Alan Dow}
\address {
A. S. Dow\\
Department of Mathematics and Statistics\\
University of North Carolina at Charlotte\\
Charlotte, NC 28223, USA}
\email{adow@charlotte.edu}

\subjclass[2010]{}
\keywords{}

\thanks{The second author is supported by NSF grant DMS-2154229.}

\begin{abstract}
The Model Hypothesis (abbreviated $\mathsf{MH}$) and $\Delta$ are set-theoretic axioms introduced by J. Roitman in her work on the box product problem. Answering some questions of Roitman and Williams on these two principles, we show (1) $\mathsf{MH}$ implies the existence of $P$-points in $\omega^*$ and is therefore not a theorem of $\mathsf{ZFC}$; (2) $\mathsf{MH}$ also fails in the side-by-side Sacks models; 
(3) as $\Delta$ holds in these models, this implies $\Delta$ is strictly stronger than $\mathsf{MH}$; 
(4) furthermore, $\Delta$ holds in a large class of forcing extensions in which it was not previously known to hold.
\end{abstract}

\maketitle


\section{Introduction}

The \emph{box product problem} asks whether the countable box product $\square \R^\w$ is normal. 
First posed by Tietze in the 1940's, this is perhaps the oldest open question in general or set-theoretic topology (and it is certainly one of the oldest); see \cite{Williams}, \cite{RW}, and \cite{Roitman2} for references. 
The problem was partly solved in the 1970's when Mary Ellen Rudin proved that the Continuum Hypothesis (\ch) implies $\square \R^\w$ is paracompact, hence normal, in \cite{Rudin}. 
But whether the normality of $\square \R^\w$ is a consequence of \zfc or independent of it remains an important open question, and this is what is meant by the ``box product problem'' today.

An important variant of the box product problem asks about the space $\square (\w+1)^\w$ instead of $\square \R^\w$. 
Because $\square (\w+1)^\w$ is a closed subspace of $\square \R^\w$, the normality of $\square \R^\w$ implies the normality of $\square (\w+1)^\w$, and in particular Rudin's work shows that $\square (\w+1)^\w$ is normal under \ch. But whether $\square (\w+1)^\w$ is consistently non-normal is an open problem. 
This version of the box product problem rose to prominence in the 1970's, in large part because of a result of Kunen proved in \cite{Kunen}:
\emph{If $X_n$ is a compact metric space for each $n \in \w$, then the box product $\square_{n \in \w}X_n$ is paracompact if and only if the nabla product $\nabla_{n \in \w}X_n$ is paracompact.} 
Recall that the \emph{nabla product} $\nabla_{n \in \w}X_n$ is defined as the quotient $\square_{n \in \w}X_n / \!=^*$ of the box product by the ``almost equal'' relation: i.e., the relation defined by taking $x =^* y$ if and only if $\set{n \in \w}{x(n) \neq y(n)}$ is finite. 
In particular, $\square (\w+1)^\w$ is paracompact if and only if $\nabla (\w+1)^\w$ is.

Many set-theoretic principles are known to imply $\square (\w+1)^\w$ is paracompact: \ch (Rudin, \cite{Rudin}), $\dom = \w_1$ (Williams, \cite{Williams}), $\bdd = \dom$ (van Douwen, \cite{vanDouwen}), $\dom = \continuum$ (Roitman, \cite{Roitman1}), and the axioms $\MH$ and $\Delta$ (Roitman, \cite{Roitman2}). 
These last two axioms, \MH and $\Delta$, were introduced by Roitman in \cite{Roitman2} (see also \cite{Roitman0,Roitman1}) specifically to deal with the box product problem, or rather its variation for $\square (\w+1)^\w$. 
These axioms represent the minimal sufficient set-theoretic tools required to push through certain arguments showing the paracompactness of $\nabla (\w+1)^\w$ (hence the paracompactness of $\square (\w+1)^\w$, by Kunen's result). 
In other words, they are very weak assumptions, seemingly the weakest possible that still enable us to prove the normality of $\square (\w+1)^\w$. 
In fact, the first author proved in joint work with Paul Gartside (see \cite{BAG}) that $\Delta$ is equivalent to the paracompactness of $\nabla (\w+1)^\w$.

Roitman left open the question of whether the principles \MH and $\Delta$ might simply follow from \zfc. If so, this would answer the $\square (\w+1)^\w$ variant of the box product problem once and for all. 

Another important question in set-theoretic topology and combinatorial set theory is whether, in the random model, there are $P$-points in $\w^*$. An alleged proof of a ``yes'' answer was published by Paul E. Cohen in \cite{Cohen} (not to be confused with Paul J. Cohen, who invented the technique of forcing), and the problem was thought to be settled for decades. 
Cohen's argument proceeds in two steps: $(1)$ he defines something called a ``pathway'' and proves that the existence of pathways implies the existence of $P$-points; $(2)$ he argues that there are pathways in the random model. 
Several years ago, Osvaldo Guzm\'an found a flaw in part $(2)$ of Cohen's argument (the flaw is explained in the appendix to \cite{CG}). Thus the problem of whether there are $P$-points in the random model was reopened, and it remains unsolved. 
However, part $(1)$ of Cohen's argument is correct: the existence of pathways implies the existence of $P$-points. 

The first theorem of this paper observes a connection between these two important open problems: 
\begin{enumerate}
\item $\mathsf{MH}$ implies the existence of pathways. Consequently, \MH implies there are $P$-points in $\w^*$, and \MH is not a theorem of $\mathsf{ZFC}$.
\end{enumerate}
All known models without $P$-points satisfy $\bdd = \dom = \aleph_1$, and therefore also satisfy $\Delta$. 
Thus, as a fairly immediate consequence of $(1)$, we deduce: 
\begin{itemize}
\item[(2)] $\Delta$ does not imply \MH.
\end{itemize}
This answers a question of Roitman from \cite{Roitman2}. Furthermore, we prove:
\begin{itemize}
\item[(3)] Pathways do not exist in the side-by-side Sacks models (so \MH fails there too). Consequently, the existence of pathways is not equivalent to the existence of $P$-points (and neither is \MH).
\end{itemize}
All these results are proved in Section~\ref{sec:P}. 
In Section~\ref{sec:ccc}, we show: 
\begin{itemize}
\item[(4)] Pathways exist in a large class of forcing extensions.
\end{itemize}
What precisely this last result means is explained in Section~\ref{sec:ccc}. For now, let us say only that the result significantly enlarges the variety of models in which $\Delta$ is known to hold. In particular, the results of Section~\ref{sec:ccc} answer a question of Roitman and Williams by showing that forcing to add $\aleph_2$ Cohen reals and then $\aleph_3$ random reals over a model of \ch produces a model of $\Delta$. 

\section{\MH and $\Delta$}

In this section we define \MH and $\Delta$ and lay out some of what is already known about them. 

Recall that $H(\k)$ denotes the set of all sets hereditarily smaller than $\k$; in particular, $H(\w_1)$ is the set of hereditarily countable sets. 
Roitman's Model Hypothesis \MH states:
\begin{itemize}
\item[\MH:$\ $] For some cardinal $\k$, there is an increasing sequence $\seq{M_\a}{\a < \k}$ of elementary submodels of $H(\w_1)$, with $H(\w_1) = \bigcup_{\a < \k} M_\a$, such that for every $\a < \k$, there is some $f \in M_{\a+1} \cap \w^\w$ such that $f \not\leq^* g$ for all $g \in M_\a \cap \w^\w$.
\end{itemize}

Observe that if \MH holds, then the $\k$ from this definition must satisfy $\bdd \leq \k \leq \dom$.
For the first inequality, note that if $\bdd > \k$ then we could find a function $g \geq^* f_\a$ for every $\a < \k$, contradicting the fact that $g \in \bigcup_{\a < \k}M_\a$ and $f_{\a+1}$ is not dominated by any function in $M_\a$. 
For the second inequality, if $\D$ is a dominating family, then we cannot have $\D \sub H_\a$ for any particular $\a$, but this situation would certainly arise if $\k > \dom$. 

Observe also that $\dom = \continuum$ implies \MH. The reason is that a L\"{o}wenheim-Skolem argument gives us an increasing sequence $\seq{M_\a}{\a < \continuum}$ of elementary submodels of $H(\w_1)$ with $H(\w_1) = \bigcup_{\a < \continuum} M_\a$ such that $|M_\a| < |H(\w_1)| = \continuum$ and $M_\a \in M_{\a+1}$ for every $\a < \continuum$. But $M_\a \in M_{\a+1}$ and $|M_\a| < \dom$ implies there is some $f \in M_{\a+1} \cap \w^\w$ that is unbounded over $M_\a \cap \w^\w$.

Let $\partial \w^\w = \bigcup \set{\w^A}{A \sub \w \text{ is infinite and co-infinite}}$. (In \cite{Roitman2}, Roitman denotes this set of partial functions by $\w^{\subset \w}$). Observe that each $x \in \partial \w^\w$ corresponds to an element $\bar x \in \square (\w+1)^\w$, namely
$$\bar x(x) \,=\, \begin{cases}
x(n) &\text{ if } n \in \mathrm{dom}(x), \\
\w &\text{ if not.}
\end{cases}$$
If $x,y \in \partial \w^\w$, then the set-theoretic difference $x \setminus y$ is a (possibly finite) partial function $\w \to \w$. If $h \in \w^\w$ and $x \in \partial \w^\w$, we define $x \not>^* h$ to mean that there are infinitely many $n \in \mathrm{dom}(x)$ such that $x(n) \leq h(n)$. 
Roitman's principle $\Delta$ states:
\begin{itemize}
\item[$\Delta$:$\ $] There exists a mapping $\partial \w^\w \to \w^\w$, which we denote by $x \mapsto h_x$, such that for any $x,y \in \partial \w^\w$, if 
\begin{enumerate}
\item $|x \setminus y| = |y \setminus x| = \aleph_0$ and
\item $\card{\set{n \in \mathrm{dom}(x) \cap \mathrm{dom}(y)}{ x(n) \neq y(n) }} < \aleph_0$, then
\end{enumerate}
then either $x \setminus y \not>^* h_y$ or $y \setminus x \not>^* h_x$.
\end{itemize}

Observe that $\bdd = \dom$ implies $\Delta$. To see this, recall that if $\bdd = \dom$ then there is a $\leq^*$-increasing enumeration $\seq{f_\a}{\a < \dom}$ of a dominating family in $\w^\w$ (i.e., a scale). Given $x \in \partial \w^\w$, define $h_x = f_\a$ where $\a$ is the minimal ordinal with $x <^* f_\a$ (by which we mean $x(n) < f_\a(n)$ for all but finitely many $n \in \mathrm{dom}(x)$). 
If $x,y \in \partial \w^\w$ satisfy (1) and (2) from the statement of $\Delta$, then either $x \setminus y \not>^* h_y$ or $y \setminus x \not>^* h_x$ depending on which of $x$ or $y$ is dominated by an earlier member of the scale.

Observe that \MH also implies $\Delta$. To see this, suppose $\seq{M_\a}{\a < \k}$ is a sequence witnessing \MH. 
By replacing each $f_{\a+1}$ from the definition of $\MH$ with a strictly larger, strictly increasing function, we may (and do) assume $x \not>^* f_{\a+1}$ for every $x \in M_\a \cap \partial \w^\w$.  
Now given $x \in \partial \w^\w$, define $h_x = f_{\a+1}$ where $\a$ is minimal with $x \in M_\a$. 
If $x,y \in \partial \w^\w$ satisfy (1) and (2) from the statement of $\Delta$, then either $x \setminus y \not>^* h_y$, if $y$ does not appear before $x$ in the sequence of $M_\a$'s, or $y \setminus x \not>^* h_x$ if $x$ does not appear before $y$.

All the above observations, plus some of the results mentioned in the introduction, are summarized in the following diagram: 

\vspace{1mm}

\begin{center}
\begin{tikzpicture}[xscale=.82,yscale=.7]

\node at (0,3.95) {\small \ch};
\node at (4,3.95) {\small $\bdd = \dom$};
\node at (0,2) {\small $\dom = \continuum$};
\node at (1.9,2) {\small $\MH$};
\node at (4,0) {\small $\Delta$};
\node at (6.7,.3) {\small $\nabla (\w+1)^\w$};
\node at (6.7,-.3) {\footnotesize paracompact};
\node at (10.25,.3) {\small $\square (\w+1)^\w$};
\node at (10.25,-.3) {\footnotesize paracompact};
\node at (13.65,.3) {\small $\square (\w+1)^\w$};
\node at (13.65,-.3) {\footnotesize normal};
\node at (1.78,.32) {\footnotesize pathways};
\node at (1.8,-.13) {\footnotesize exist};
\node at (1.85,-1.85) {\footnotesize $P$-points};
\node at (1.85,-2.3) {\footnotesize exist};
\node at (10.25,2.7) {\small $\square \R^\w$};
\node at (10.25,2.15) {\footnotesize paracompact};
\node at (13.35,2.7) {\small $\square \R^\w$};
\node at (13.35,2.15) {\footnotesize normal};

\node at (1.03,2) {\small $\implies$};
\node at (3.15,0) {\small $\implies$};
\node at (4.9,0) {\small $\Longleftrightarrow$};
\node at (8.5,0) {\small $\Longleftrightarrow$};
\node at (12,0) {\small $\implies$};
\node at (0,2.95) {\rotatebox{-90}{\small $\implies$}};
\node at (3.95,1) {\rotatebox{-90}{\small $\implies$}};
\node at (1.85,1.1) {\rotatebox{-90}{\small $\implies$}};
\node at (2.9,3.93) {\small $\implies$};
\node at (2.5,3.93) {\small $=$};
\node at (2.25,3.93) {\small $=$};
\node at (2,3.93) {\small $=$};
\node at (1.75,3.93) {\small $=$};
\node at (1.5,3.93) {\small $=$};
\node at (1.25,3.93) {\small $=$};
\node at (1,3.93) {\small $=$};
\node at (.75,3.93) {\small $=$};
\node at (1.85,-1) {\rotatebox{-90}{\small $\implies$}};
\node at (10.25,1.25) {\rotatebox{-90}{\small $\implies$}};
\node at (13.4,1.25) {\rotatebox{-90}{\small $\implies$}};
\node at (12.05,2.45) {\small $\implies$};
\node at (3.95,1.5) {\rotatebox{-90}{\small $=$}};
\node at (3.95,1.75) {\rotatebox{-90}{\small $=$}};
\node at (3.95,2) {\rotatebox{-90}{\small $=$}};
\node at (3.95,2.25) {\rotatebox{-90}{\small $=$}};
\node at (3.95,2.5) {\rotatebox{-90}{\small $=$}};
\node at (3.95,2.75) {\rotatebox{-90}{\small $=$}};
\node at (3.95,3) {\rotatebox{-90}{\small $=$}};
\node at (3.95,3.25) {\rotatebox{-90}{\small $=$}};

\draw (.719,1.98)--(.719,2.08);
\draw (2.84,-.02)--(2.84,.08);
\draw (-.02,3.312)--(.07,3.312);
\draw (3.93,3.402)--(4.02,3.402);
\draw (1.83,-.638)--(1.92,-.638);
\draw (.62,3.91)--(.62,4.01);

\end{tikzpicture}
\end{center}

Arrows that are open at the back indicate implications that are not known to reverse, and arrows that are closed at the back indicate implications that are known not to reverse. 
For example, it is open whether $\Delta$, or for that matter any of the five statements on the right side of the diagram, is a theorem of \zfc. 
We have no models in which any of these statements fail, hence no way to see that implications involving them fail to reverse. Thus all the arrows on the right side of the digram are open at the back. 

The fact that ``$P$-points exist'' does not imply ``pathways exist'' is proved in Section~\ref{sec:P}, where we show the side-by-side Sacks models have $P$-points but no pathways. (The same is true for the iterated Sacks model, by essentially the same proof, but we provide the details for the side-by-side models.) 
These same models satisfy $\Delta$, which is how we know the existence of pathways does not imply $\Delta$. We also prove in Section~\ref{sec:P} that the existence of pathways implies both $\Delta$ and the existence of $P$-points.

The fact that \ch does not imply $\bdd = \dom$ or $\dom = \continuum$ is common knowledge. So too is the fact that $\bdd < \dom = \continuum$ is consistent, and as $\dom = \continuum$ implies $\Delta$, this means $\Delta$ does not imply $\bdd = \dom$. 

To finish justifying how we have drawn our arrows, we claim that $\MH$ does not imply $\dom = \continuum$. 
To see this, 
begin with a model $V$ of $\neg \ch$, and then add a generic $G$ for the length-$\w_1$ finite support iteration of Hechler forcing. 
In the resulting model $V[G]$, we have $\dom = \aleph_1 < \continuum$, and so $\dom = \continuum$ fails. 
For each $\a < \w_1$, let $M_\a = H(\w_1)^{V[G_\a]}$ (i.e., the hereditarily countable sets in the intermediate model $V[G_\a]$ obtained by restricting $G$ to the first $\a$ stages of the iteration). 
If $V$ contains sufficiently large cardinals, then $M_\a \preceq H(\w_1)^{V[G]}$ for all $\a$, and therefore $\seq{M_\a}{\a < \w_1}$ witnesses \MH in $V[G]$. 
(For example, if there is a weakly compact Woodin cardinal, then by a result of Neeman and Zapletal in \cite{NZ} there is an elementary embedding $L(\R^{V[G_\a]}) \to L(\R^{V[G]})$ for all $\a$. 
This implies $H(\w_1)^{V[G_\a]} \preceq H(\w_1)^{V[G]}$ with room to spare.) 
Thus, unless certain large cardinal axioms turn out to be inconsistent, \MH does not imply $\dom = \continuum$.

\section{The consistent failure of \MH}\label{sec:P}

What follows is a slight generalization of the notion of a pathway defined by Paul E. Cohen in \cite{Cohen}. Given $f,g \in \w^\w$, define the \emph{join} of $f$ and $g$ to be the function $f \vee g \in \w^\w$ given by
$$(f \vee g)(n) \,=\, \begin{cases}
f(\frac{n}{2}) &\text{ if $n$ is even,} \\
g(\frac{n+1}{2}) &\text{ if $n$ is odd.}
\end{cases}$$
A \emph{pathway} is an increasing sequence $\seq{A_\a}{\a < \k}$ of subsets of $\w^\w$, for some cardinal $\k$, such that $\bigcup_{\a < \k}A_\a = \w^\w$ and, for all $\a < \k$,
\begin{itemize}
\item[$\circ$] $A_\a$ does not dominate $A_{\a+1}$,
\item[$\circ$] if $f,g \in A_\a$ then $f \vee g \in A_\a$, and
\item[$\circ$] $A_\a$ is closed under Turing reducibility.
\end{itemize} 
The reason this is a slight generalization of Cohen's definition is because Cohen requires $\k = \dom$. We omit this requirement because (1) it is not needed to prove that the existence of pathways implies the existence of $P$-points, and (2) omitting it enables us to prove the following theorem. 
Note however, that if $\seq{A_\a}{\a < \w_1}$ is a pathway, then $\bdd \leq \k \leq \dom$, for exactly the same reasons that a witness to \MH must have length between $\bdd$ and $\dom$.

\begin{theorem}
\MH implies there is a pathway.
\end{theorem}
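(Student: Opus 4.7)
The natural plan is to extract a pathway directly from the \MH-sequence: if $\seq{M_\a}{\a < \k}$ witnesses \MH, simply set
$$A_\a \,=\, M_\a \cap \w^\w$$
for each $\a < \k$, and verify the three bullet points in the definition of a pathway. The sequence $\seq{A_\a}{\a < \k}$ is increasing because the $M_\a$'s are; and since every $f \in \w^\w$ is hereditarily countable, we have $\w^\w \subseteq H(\w_1) = \bigcup_{\a < \k} M_\a$, so $\bigcup_{\a<\k} A_\a = \w^\w$.

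The non-domination bullet is essentially built into \MH. The witnessing function $f \in M_{\a+1} \cap \w^\w$ with $f \not\leq^* g$ for all $g \in M_\a \cap \w^\w$ is exactly an element of $A_{\a+1}$ not dominated by any element of $A_\a$, so $A_\a$ fails to dominate $A_{\a+1}$.

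Closure under joins and Turing reducibility is where elementarity does the work. Given $f, g \in A_\a$, the join $f \vee g$ is defined from $f$ and $g$ by a simple $\Delta_0$ recipe inside $H(\w_1)$, so by the elementarity of $M_\a \preceq H(\w_1)$, the unique element $h \in H(\w_1)$ satisfying ``$h = f \vee g$'' lies in $M_\a$, giving $f \vee g \in A_\a$. Similarly, if $f \in A_\a$ and $h \leq_T f$, then $h$ is defined from $f$ and some program index $e \in \w \subseteq M_\a$ via an arithmetic formula in $H(\w_1)$; elementarity again places $h$ in $M_\a \cap \w^\w = A_\a$.

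I do not expect any genuine obstacle here; the proof is essentially bookkeeping around the elementarity of the $M_\a$'s, and the slight generalization of Cohen's definition (dropping the requirement $\k = \dom$) is precisely what makes the reduction transparent, since the \MH-sequence can have any length in the interval $[\bdd, \dom]$. The one point worth double-checking is that joins and Turing reductions are indeed absolute between $M_\a$ and $H(\w_1)$, but both operations are defined by recursion on $\w$ using parameters already present in $M_\a$, so absoluteness is automatic.
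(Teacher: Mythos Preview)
Your proposal is correct and follows essentially the same approach as the paper: set $A_\a = M_\a \cap \w^\w$ and verify the pathway axioms via elementarity of $M_\a \preceq H(\w_1)$. The only cosmetic difference is in the Turing-closure step, where the paper argues that the countable set $\set{g}{g \leq_T f}$ belongs to $M_\a$ and hence is a subset of $M_\a$, while you argue directly that each individual $h \leq_T f$ is definable from $f$ and a program index $e \in \w \sub M_\a$; both arguments are valid and amount to the same use of elementarity.
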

\begin{proof}
Suppose $\seq{M_\a}{\a < \k}$ is a sequence of models witnessing \MH. For each $\a < \k$, let $A_\a = M_\a \cap \w^\w$. 
Then $\seq{A_\a}{\a < \k}$ is a sequence of subsets of $\w^\w$. It is increasing because $\seq{M_\a}{\a < \k}$ is increasing, and $\bigcup_{\a < \k}A_\a = \w^\w \cap \bigcup_{\a < \k}M_\a = \w^\w$.  
By the definition of \MH, some $f_{\a+1} \in M_{\a+1}$ is not dominated by $M_\a \cap \w^\w$; i.e., $A_\a$ does not dominate $A_{\a+1}$.

Because each $M_\a$ is an elementary substructure of $H(\w_1)$, each $M_\a$ is closed under basic set-theoretic operations (like taking the join of two functions), which means each $A_\a$ is closed under the join operator. 
Finally, if $f \in A_\a$ then $\set{g \in \w^\w}{g \leq_T f}$ (where $g \leq_T f$ means that $g$ is Turing reducible to $f$) is countable. 
If $f \in M_\a$ then $\set{g \in \w^\w}{g \leq_T f} \in M_\a$ by elementarity, because this set is a member of $H(\w_1)$ definable from $f$. 
But countable members of $M_\a$ are subsets of $M_\a$ (another consequence of elementarity), so $\set{g \in \w^\w}{g \leq_T f} \sub M_\a$. 
Hence $A_\a$ is closed under Turing reducibility, and $\seq{A_\a}{\a < \k}$ is a pathway.
\end{proof}

A notion of ``strong pathways'' is defined in \cite{FBH}, and we note in passing that the existence of these strong pathways is very similar to the assertion that there is a witness to \MH with length $\k = \w_1$. (It would be equivalent to it if the word ``elementary'' were deleted from the definition of \MH, and replaced with the weaker requirement that each $M_\a$ be a model of $\zfc^-$.) More on pathways can also be found in \cite{FB} or in the appendix to \cite{CG}.

\begin{question}
Is the existence of pathways equivalent to \MH?
\end{question}

\begin{theorem}
The existence of a pathway implies $\Delta$. 
\end{theorem}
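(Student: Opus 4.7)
The plan is to follow almost verbatim the argument, given earlier in the section, that $\MH$ implies $\Delta$, replacing the hierarchy of elementary submodels $M_\a$ with the hierarchy of subsets $A_\a$ from the pathway. The moral should be that the two closure properties demanded in the definition of a pathway---closure under joins and closure under Turing reducibility---are exactly the minimum amount of ``model-like'' structure needed to push that argument through.

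First I would encode each $x \in \partial \w^\w$ as a total function $\tilde x \in \w^\w$, say by setting $\tilde x(n) = x(n)+1$ when $n \in \mathrm{dom}(x)$ and $\tilde x(n) = 0$ otherwise, so that both $\mathrm{dom}(x)$ and $x$ are Turing-recoverable from $\tilde x$. Since $\bigcup_{\a < \k} A_\a = \w^\w$, I can then set $\a(x) := \min\{\a : \tilde x \in A_\a\}$. For each $\a$, I would pick $f_{\a+1} \in A_{\a+1}$ not dominated by $A_\a$ and replace it with $F_{\a+1}(n) := n + \max\{f_{\a+1}(k) : k \leq n\}$, which is strictly increasing, pointwise above $f_{\a+1}$ (hence still not dominated by $A_\a$), and lies in $A_{\a+1}$ by Turing closure. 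Finally I set $h_x := F_{\a(x)+1}$.

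To verify this map witnesses $\Delta$, I would take $x, y \in \partial \w^\w$ satisfying (1) and (2), and assume without loss of generality that $\a(x) \leq \a(y)$, so that $\tilde x, \tilde y \in A_{\a(y)}$. The key step is to produce a function in $A_{\a(y)}$ whose failure to be dominated by $F_{\a(y)+1}$ witnesses $x \setminus y \not>^* h_y$. Let $D := \mathrm{dom}(x) \setminus \mathrm{dom}(y)$ (infinite by (1)--(2)), let $e:\w \to D$ enumerate $D$ in increasing order, and define $\bar x(i) := x(e(i))$. Closure of $A_{\a(y)}$ under joins yields $\tilde x \vee \tilde y \in A_{\a(y)}$, from which $\bar x$ is Turing-computable---read off $\mathrm{dom}(x)$ and $\mathrm{dom}(y)$, hence $D$, then look up $x$ at the $i$-th element of $D$---so closure under Turing reducibility places $\bar x$ in $A_{\a(y)}$. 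Since $F_{\a(y)+1}$ is not dominated by $\bar x$, infinitely many $i$ satisfy $\bar x(i) < F_{\a(y)+1}(i) \leq F_{\a(y)+1}(e(i))$ (the second inequality using monotonicity and $e(i) \geq i$), and each such $i$ supplies $n := e(i) \in D$ with $x(n) < h_y(n)$. Condition (2) forces $D$ to lie inside $\mathrm{dom}(x \setminus y)$ up to a finite set with $(x \setminus y)(n) = x(n)$ on $D$, so this will give $x \setminus y \not>^* h_y$, as desired.

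The hard part will be purely bookkeeping: I must arrange the encoding of partial into total functions so that everything relevant---the domains, the values, the enumeration $e$, the subsequence $\bar x$---can be extracted from things already in $A_{\a(y)}$ using nothing beyond joins and Turing reducibility, and I must exploit the monotonicity of $F_{\a+1}$ carefully so that non-domination at index $i$ translates into the required failure at index $n = e(i)$.
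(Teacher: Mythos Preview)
Your proposal is correct and follows essentially the same route as the paper: encode each $x\in\partial\w^\w$ by a total $\tilde x$, rank it by the first $A_\a$ containing $\tilde x$, take $h_x$ to be a monotone function unbounded over $A_{\a(x)}$, and for the comparison step build from $\tilde x\vee\tilde y$ a single function in $A_{\a(y)}$ whose failure to dominate $h_y$ yields $x\setminus y\not>^*h_y$. The only cosmetic differences are that the paper uses $g(n)=x(\min\{m\geq n: m\in\mathrm{dom}(x\setminus y)\})$ instead of your enumeration $\bar x(i)=x(e(i))$, and it defines $h_x$ as some $f_{\a_x}$ unbounded over $A_{\a_x}$ rather than insisting it lie in $A_{\a_x+1}$; neither change affects the argument.
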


\noindent The proof of this theorem is just a small modification of the proof that \MH implies $\Delta$ (which is due to Roitman).

\begin{proof}
Suppose $\seq{A_\a}{\a < \k}$ is a pathway. 
For each $\a < \k$, there is some function not dominated by $A_\a$, which implies there is a non-decreasing function not dominated by $A_\a$. 
Let $f_\a$ be some such function. 

For each $x \in \partial \w^\w$, define a function $\tilde x \in \w^\w$ by setting 
$$\tilde x (n) \,=\, \begin{cases}
x(n)+1 &\text{ if } n \in \mathrm{dom}(x),\\
0 &\text{ if } n \notin \mathrm{dom}(x)
\end{cases}$$
for all $n$. 
The total function $\tilde x$ is computable from $x$, and the partial function $x$ is computable from $\tilde x$. 
For each $x \in \partial \w^\w$, there is some $\a < \k$ such that $\tilde x \in A_\a$. Let $\a_x$ denote the minimum such $\a < \k$, and define $h_x = f_{\a_x}$.

Now suppose that $x,y \in \partial \w^\w$ satisfy (1) and (2) from the statement of $\Delta$: i.e., $\card{x \setminus y} = \card{y \setminus x} = \aleph_0$ and $\card{\set{n \in \mathrm{dom}(x) \cap \mathrm{dom}(y)}{x(n) \neq y(n)}} < \aleph_0$. 
Furthermore, suppose $\a_x \leq \a_y$. 

Because the $A_\a$'s are increasing, we have $\tilde x,\tilde y \in A_{\a_y}$, and so $\tilde x \vee \tilde y \in A_{\a_y}$. 
Because $x$ and $y$ are each computable from $\tilde x \vee \tilde y$, this implies that any function computable from $x$ and $y$ is a member of $A_{\a_y}$. 
In particular, $A_{\a_y}$ contains the function $g$ defined by 
$$g(n) \,=\, x(\min \set{m \geq n}{m \in \mathrm{dom}(x \setminus y)}),$$
which is well-defined because $\mathrm{dom}(x \setminus y)$ is infinite. 
Thus $h_y = f_{\a_y} \not\leq^* g$, that is, there are infinitely many $n$ such that $f_{\a_y}(n) > g(n)$. 
Fix $n$, and let $m = \min \set{m \geq n}{m \in \mathrm{dom}(x \setminus y)}$. Using the fact that $f_{\a_y}$ is non-decreasing, $f_{\a_y}(n) > g(n)$ implies
$$h_y(m) = f_{\a_y}(m) \geq f_{\a_y}(n) > g(n) = x(m)$$
and $m \in \mathrm{dom}(x \setminus y)$. 
Because there are infinitely many $n$ with $f_{\a_y}(n) > g(n)$, this implies there are infinitely many $m \in \mathrm{dom}(x \setminus y)$ with $h_y(m) > x(m)$. In other words, $x \setminus y \not>^* h_y$. Similarly, if $\a_y \leq \a_x$ then $y \setminus x \not>^* h_x$.
\end{proof}

Next we include a proof that the existence of pathways implies there are $P$-points in $\w^*$. While this can be found in Cohen's paper \cite{Cohen}, we have chosen to include a proof for three reasons. First, we have slightly expanded Cohen's definition of a pathway, so we must argue that our generalized pathways still imply the existence of $P$-points. 
Second, some readers may not trust a proof in a paper known to contain an incorrect proof. 
The third reason is simply the convenience of the reader who wishes to see the proof. 

Ketonen proved in \cite{Ketonen} that $\dom = \continuum$ implies there are $P$-points in $\w^*$. 
Given that $\dom = \continuum$ implies the existence of pathways, but not vice versa, the following theorem can be seen as strengthening Ketonen's result. 

\begin{theorem}\label{thm:Ppoints}
The existence of a pathway implies there are $P$-points in $\w^*$.
\end{theorem}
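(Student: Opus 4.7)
The plan is to construct a $P$-point ultrafilter $\U$ on $\w$ by a transfinite recursion of length $\k$ along the pathway $\seq{A_\a}{\a < \k}$. At stage $\a$ I will maintain a filter base $\F_\a$, with $\F_\a \sub \F_{\a+1}$ throughout, and set $\U = \bigcup_{\a < \k} \F_\a$ at the end. The central invariant is that every member of $\F_\a$ has its characteristic function lying in $A_\a$. By Turing closure of $A_\a$ this invariant extends automatically to finite intersections of members of $\F_\a$, and because $\bigcup_{\a < \k} A_\a = \w^\w$, every subset of $\w$ and every countable partition of $\w$ is coded by some function in some $A_\a$.

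At stage $\a$, an interior sub-recursion performs two bookkeeping tasks. First, for each $Y \sub \w$ whose characteristic function lies in $A_\a$, we decide $Y$ by adjoining to the current filter base whichever of $Y$ or $\w \setminus Y$ is positive; Turing closure of $A_\a$ preserves the invariant. Second, for each partition $\set{P_n}{n \in \w}$ of $\w$ coded in $A_\a$, we either adjoin some piece $P_n$ that is positive or else adjoin a pseudo-intersection $X$ satisfying $\card{X \cap P_n} < \aleph_0$ for every $n$, thereby ensuring that $\U$ will be a $P$-point.

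The crux is an extension lemma for the partition task. Given a filter base $\F$ whose members' characteristic functions lie in $A_\a$ and a partition $\set{P_n}{n \in \w}$ coded in $A_\a$ with no $P_n$ being $\F$-positive, we construct the required pseudo-intersection $X$ using a single function $h \in A_{\a+1}$ that is not dominated by $A_\a$ (such $h$ is supplied by the pathway property). For each finite $\G \sub \F$, the join and Turing closure of $A_\a$ produce a function in $A_\a$ encoding how $\bigcap \G$ distributes across the tail unions $\bigcup_{k \geq n} P_k$; the undomination of $h$ over these encoding functions then drives a diagonal construction of $X$ that remains positive against every finite subfamily of $\F$ while meeting each $P_n$ only finitely. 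Because $X$ is Turing-computable from $h$ and the partition code, its characteristic function lies in $A_{\a+1}$, preserving the invariant at the next stage.

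The main obstacle I anticipate is iterating the extension lemma inside the interior sub-recursion at stage $\a$, where one may need to handle up to $\continuum$ many partitions coded in $A_\a$ using only the single undominated function $h \in A_{\a+1}$. The resolution will exploit the Turing-reducibility closure of $A_{\a+1}$ to extract countably many derived functions from $h$ as new partitions are treated, and will arrange the bookkeeping so that successive applications of the lemma accommodate a filter base that grows within $A_{\a+1}$. Once all subsets and partitions coded in $A_\a$ have been resolved, we pass to stage $\a+1$ and continue. The final union $\U$ is then an ultrafilter and, by the design of the partition-handling task, a $P$-point.
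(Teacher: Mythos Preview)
You correctly identify the main obstacle, but your proposed resolution does not work, and this is a genuine gap. The interior sub-recursion at stage $\a$ has length up to $|A_\a|$, which may be $\continuum$. After you adjoin the first pseudo-intersection $X$ (computable from $h$ and the partition code, hence in $A_{\a+1}$), your filter base now contains elements of $A_{\a+1}$, not just $A_\a$. When you apply your extension lemma to the \emph{next} partition, the encoding functions you need to dominate (how $\bigcap \G$ distributes across the tail unions, for finite $\G$ drawn from the current filter base) now live in $A_{\a+1}$, not $A_\a$. But $h$ is only undominated over $A_\a$, so the lemma no longer applies. Climbing to an $h' \in A_{\a+2}$ undominated over $A_{\a+1}$ just pushes the problem forward: after $\continuum$ iterations you will have consumed the entire pathway, since $\k \leq \dom \leq \continuum$. ``Extracting countably many derived functions from $h$'' cannot help, because $h$ simply carries no information about functions in $A_{\a+1} \setminus A_\a$.

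The missing idea, which is what the paper does, is to avoid any inner recursion altogether. For the ``decide every $Y$'' task, just extend $\F_\a$ to an arbitrary ultrafilter $\U_\a$ and keep only $\U_\a \cap \mathrm{Set}_\a$; this decides all sets coded in $A_\a$ in one stroke. For the pseudo-intersection task, the key is a \emph{uniform} map $\psi$, computable from the single undominated $f$, defined by $\psi(\vec s\,) = \bigcup_n \big(f(n) \cap \bigcap_{i \le n} B_i\big)$ for $\vec s = \seq{B_n}{n \in \w}$. One then proves directly that the entire family $\F \cup \set{\psi(\vec s\,)}{\vec s \in \mathrm{Seq}_\a \cap \F^\w}$ is a filter base, by showing that any finite list $F_0,\dots,F_{k-1},\psi(\vec s_0),\dots,\psi(\vec s_{\ell-1})$ contains $\psi(\vec t\,)$ for the ``diagonal'' sequence $\vec t$ with $n$th term $\bigcap_i F_i \cap \bigcap_j B^j_n$. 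This compatibility argument is the heart of the proof and is exactly what your one-at-a-time lemma cannot deliver.
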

\begin{proof}
Our proof more or less follows Ketonen's. 
Fix a pathway $\seq{A_\a}{\a < \k}$. 

To begin, note that subsets of $\w$ can be ``coded'' by their characteristic functions. 
Thus, even though $A_\a$ contains functions and not sets, we may think of it as describing a collection of subsets of $\w$: 
for each $\a < \k$, define 
$$\mathrm{Set}_\a \,=\, \set{B \sub \w}{\chi_{{}_B} \in A_\a}.$$
Given $B,C \sub \w$, note that the characteristic function $\chi_{{}_{B \cap C}}$ is computable from $\chi_{{}_B} \vee \chi_{{}_C}$. Because $A_\a$ is closed under the $\vee$ operator and Turing reducability, this means that $\mathrm{Set}_\a$ is closed under binary intersections. 
It follows (via induction) that $\mathrm{Set}_\a$ is closed under finite intersections. 
Furthermore, $\mathrm{Set}_\a$ is closed under Turing reducibility, because $A_\a$ is. 
Also, if $B \in \mathrm{Set}_\a$ then the natural enumeration of $B$ (i.e., the function mapping $n$ to $\text{the $n^{\mathrm{th}}$ element of $B$}$), is in $A_\a$. 

Sequences of subsets of $\w$ can also be coded with functions. 
Fix some coding/decoding functions $c$ and $d$ such that for any sequence $\vec{s} = \seq{B_n}{n \in \w}$ of subsets of $\w$, $c(\vec s\,) \in \w^\w$ and $d(c(\vec s\,)) = \vec s$. 
Furthermore, do this in such a way that $\chi_{{}_{B_n}}$ (the characteristic function of the $n^\mathrm{th}$ member of $\vec s\,$) is uniformly (in $n$) computable from $c(\vec s\,)$ for all $n$. 
(This can be accomplished, for example, with a computable pairing function $\w \times \w \to \w$, which can be used to code a sequence of characteristic functions into a single $0$-$1$ sequence.)
For each $\a < \k$, let 
$$\mathrm{Seq}_\a \,=\, \set{\vec s \in (\P(\w))^\w}{c(\vec s\,) \in A_\a}.$$
Because the $A_\a$ are closed under Turing reducibility, every subset of $\w$ computable from some $\vec s \in \mathrm{Seq}_\a$ is in $\mathrm{Set}_\a$, and every sequence of sets computable from $\vec s$ is in $\mathrm{Seq}_\a$. 
For example, if $\vec s = \seq{B_n}{n \in \w} \in \mathrm{Seq}_\a$, then $B_n \in \mathrm{Set}_\a$ for all $n$, and
$\bigcap_{i \leq n}B_i \in \mathrm{Set}_\a$ for all $n$, and $\seq{\bigcap_{i \leq n}B_i}{n \in \w} \in \mathrm{Seq}_\a$.

\begin{claim}
Fix $\a < \k$, and suppose $\F \sub A_\a$ is a free filter on $\w$.
For every $f \in \w^\w$ not dominated by $A_\a$,
there is a function $\psi$, computable from $f$, 
such that $\psi$ maps $\mathrm{Seq}_\a \cap \F^\w$ to infinite subsets of $\w$, in such a way that
\begin{enumerate}
\item For every $\vec s \in \mathrm{Seq}_\a \cap \F^\w$, $\psi(\vec s\,)$ is a pseudo-intersection for $\vec s$.
\item $\F \cup \set{\psi(\vec s\,)}{\vec s \in \mathrm{Seq}_\a \cap \F^\w}$ is a filter base.
\end{enumerate}
\end{claim}
\vspace{1mm}
\noindent \emph{Proof of Claim.$\ $} Fix some $f \in \w^\w$ that is not dominated by $A$: i.e., $f \not\leq^* g$ for every $g \in A$.
Given $\vec s = \seq{B_n}{n \in \w} \in \mathrm{Seq}_\a \cap \F^\w$, define
$$\psi(\vec s\,) \,=\, \textstyle \bigcup_{n \in \w} \big( f(n) \cap \bigcap_{i \leq n} B_i \big).$$
For each $n \in \w$, the set $\bigcap_{i \leq n} B_i$ is infinite (because $\F$ is a free filter and $B_0,\dots,B_n \in \F$).

As mentioned above, $\vec s \in \mathrm{Seq}_\a$ implies $\seq{\bigcap_{i \leq n}B_i}{n \in \w} \in \mathrm{Seq}_\a$. 
From (the code for) this sequence, one can compute the function $g$ mapping $n \in \w$ to the $n^\mathrm{th}$ member of $\bigcap_{i \leq n}B_i$. So $g \in A_\a$. 
Hence $g \not>^* f$, which implies $f(n) \cap \bigcap_{i \leq n} B_i$ has size $\geq\! n$ for infinitely many $n$. 
Thus $\psi(\vec s\,)$ is infinite.
As $\psi(\vec s\,) \setminus B_n \sub f(n)$ for all $m$, this means $\psi(\vec s\,)$ is a pseudo-intersection for $\vec s$.

To finish the proof of the claim, it remains to check that
$$\G \,=\, \F \cup \set{\psi(\vec s\,)}{\vec s \in \mathrm{Seq}_\a \cap \F^\w}$$
is a filter base. 
To this end, let us fix some $F_0,F_1,\dots,F_{k-1} \in \F$, and some $\vec s_0 = \seq{B^0_n}{n \in \w},\vec s_1 = \seq{B^1_n}{n \in \w},\dots,\vec s_n = \seq{B^{\ell-1}_n}{n \in \w}$ in $A_\a \cap \F^\w$.
Then define a new sequence $\vec{\hspace{.4mm}t}\, = \seq{Y_n}{n \in \w}$ of subsets of $\w$ by setting
$$Y_n \,=\, F_0 \cap F_1 \cap \dots \cap F_{k-1} \cap B^0_n \cap B^1_n \cap \dots \cap B_n^{\ell-1}$$
for all $n \in \w$.
Note that $F_i \in A_\a$ for every $i < k$ and $\seq{B^i_n}{n \in \w} \in \mathrm{Seq}_\a$ for every $i < \ell$. Because the sequence $\vec{\hspace{.4mm}t}$ is is computable from these inputs, $\vec{\hspace{.4mm}t} \in \mathrm{Seq}_\a$. 
Furthermore, each $Y_n$ is a member of $\F$. 
Thus $\vec{\hspace{.4mm}t}$ is in the domain of $\psi$, and $\psi(\vec{\hspace{.4mm}t}\,) \in \G$.
By our definition of $\psi$ and of the $Y_n$, we have $\psi(\vec{\hspace{.4mm}t}\,) \sub F_i$ for every $i < k$ and $\psi(\vec{\hspace{.4mm}t}\,) \sub \psi(\vec s_i)$ for every $i < \ell$. Hence $\psi(\vec{\hspace{.4mm}t}\,) \in \G$ and
$$\psi(\vec{\hspace{.4mm}t}\,) \,\sub\, \textstyle \bigcap_{i < k}F_i \cap \bigcap_{i < \ell} \psi(\vec s_i).$$
Thus any finitely many members of $\G$ have a subset of their intersection in $\G$; in other words, $\G$ is a filter base.
\hfill {\scriptsize $\square\!\!$}
\vspace{2mm}

Returning to the proof of the theorem, we now produce an increasing sequence $\seq{\F_\g}{\g < \k}$ of filter bases via transfinite recursion. 
For the base case, let $\F_0$ be the Fr\'echet filter. 
If $\b$ is a limit ordinal, let $\F_\b = \bigcup_{\xi < \b}\F_\xi$.

If $\b$ is a successor ordinal, say $\b = \a+1$, then at stage $\b$ of the recursion we have an increasing sequence $\seq{\F_\xi}{\xi \leq \a}$ of filter bases. 
Let $\U_\a$ be some ultrafilter extending $\F_\a$, and let $\F$ be the (free) filter generated by the filter base $\U_\a \cap \mathrm{Set}_\a$. Note that $\F_\a \sub \F$. 
There is a function $f \in A_\b$ not dominated by $A_\a$. 
Let $\psi$ be the function described in our claim, as defined from $f$. 
Applying our claim,
$$\F_\b \,=\, \F \cup \set{\psi(\vec s\,)}{\vec s \in \mathrm{Seq}_\a \cap \F^\w}$$
is a filter base. 
As $\F_\a \sub \F$, we have $\F_\a \sub \F_\b$.
Furthermore, because $\psi(\vec s\,)$ is computable from $f$ and $\vec s$ for any given $\vec s \in A_\a$, $\psi(\vec \s,) \in \mathrm{Set}_\b$ whenever $\vec s \in \mathrm{Seq}_\a$. Hence $\F_\b \sub \mathrm{Set}_\b$. 
This completes the recursion. 

Let $\U = \bigcup_{\g < \k} \F_\g$. We claim that $\U$ is a $P$-point in $\w^*$.

To see that $\U$ is an ultrafilter, fix some $A \sub \w$. There is some $\a < \k$ such that $A \in \mathrm{Set}_\a$. 
At stage $\b = \a+1$ of our recursion, we chose an ultrafilter $\U_\a$ and then described a filter base $\F_\b$ extending $\U_\a \cap \mathrm{Set}_\a$. This implies that either $A \in \F_\b$ or else $\w \setminus A \in \F_\b$. Hence either $A \in \U$ or else $\w \setminus A \in \U$. As we already know $\U$ is a filter base, this means $\U$ is an ultrafilter.

To see that $\U$ is a $P$-point, suppose $\vec s = \seq{B_n}{n \in \w}$ is a sequence of sets in $\U$.
There is some $\a < \k$ with $\vec s \in \mathrm{Seq}_\a$. 
At stage $\b = \a+1$ of our recursion, we chose an ultrafilter $\U_\a$ and obtained a filter base $\F_\b$ extending $\U_\a \cap \mathrm{Set}_\a$. 
For each $n \in \w$, we have $B_n \in \mathrm{Set}_\a$, so because $\U_\a$ is an ultrafilter, either $B_n \in \U_\a$ or $\w \setminus B_n \in \U_\a$. But $\U \supseteq \F_\b \supseteq \U_\a \cap \mathrm{Set}_\a$, so in fact $B_n \in \U_\a$ for every $n$. Thus $\vec s = \seq{B_n}{n \in \w} \in \mathrm{Seq}_\a \cap (\U_\a \cap \mathrm{Set}_\a)^\w$, and at stage $\b$ of our recursion we added to $\F_\b$ a pseudo-intersection $\psi(\vec s\,)$ for the sequence $\vec s$.
Because $\F_\b \sub \U$, this shows that $\U$ contains a pseudo-intersection for $\vec s$.
\end{proof}

Ketonen proved a little more than just that $\dom = \continuum$ implies the existence of $P$-points; he showed that $\dom = \continuum$ implies every filter generated by $<\!\dom$ sets extends to a $P$-point. 
Let us point out that, with a little more work, the above argument can be adjusted to show that if there is a pathway of length $\k$, then every filter generated by $<\!\k$ sets extends to a $P$-point.

\begin{corollary}
It is consistent that \MH is false.
\end{corollary}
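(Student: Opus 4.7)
The proof will be an immediate application of what has already been established, combined with a classical consistency result. The plan is to chain together the two preceding theorems of this section to obtain: \MH implies the existence of a pathway (first theorem of Section~\ref{sec:P}), and the existence of a pathway implies the existence of $P$-points in $\w^*$ (Theorem~\ref{thm:Ppoints}). Composing these, \MH implies there is a $P$-point in $\w^*$.

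To derive the corollary, it then suffices to invoke any model of \zfc in which no $P$-points exist. I would cite Shelah's celebrated construction of a model of \zfc with no $P$-points in $\w^*$ (see Wimmers' exposition, or Chapter~VI of \emph{Proper and Improper Forcing}). In any such model, the contrapositive of the chain above forces \MH to fail. Hence $\neg \MH$ is consistent with \zfc, provided \zfc itself is consistent.

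The only conceivable obstacle is whether all that is genuinely needed for the argument is already in place, and it is: the two preceding theorems give the required implication \MH $\Rightarrow$ ``$P$-points exist,'' and Shelah's theorem supplies the required consistency of the negation of the conclusion. No new combinatorial work is involved. Alternatively, one could point to any of the other models now known to contain no $P$-points (for instance, those arising from the work of Chodounsk\'y--Guzm\'an) to reach the same conclusion.
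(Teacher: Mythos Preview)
Your proposal is correct and matches the paper's own proof essentially verbatim: the paper also deduces the corollary by combining the two preceding theorems (\MH $\Rightarrow$ pathways $\Rightarrow$ $P$-points) with Shelah's consistency result that there may be no $P$-points in $\w^*$, and likewise mentions the Chodounsk\'y--Guzm\'an models as an alternative.
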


\noindent This follows from the previous two theorems, plus the fact that it is consistent there are no $P$-points in $\w^*$. 
This consistency result was first proved by Shelah (see \cite{Shelah,Wimmers}). 
Later work by Chodounsk\'y and Guzm\'an in \cite{CG} shows there are no $P$-points in the Silver model, or even in side-by-side Silver extensions where (unlike in Shelah's model) $\continuum$ may be arbitrarily large.

\begin{corollary}
$\Delta$ does not imply \MH.
\end{corollary}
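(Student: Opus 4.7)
The plan is to exhibit a single model in which $\Delta$ holds but $\MH$ fails. The preceding corollary already tells us that $\MH$ fails in any model with no $P$-points in $\w^*$, since $\MH$ implies the existence of a pathway, and a pathway forces $P$-points to exist by Theorem~\ref{thm:Ppoints}. So it suffices to locate a ``no $P$-points'' model in which $\Delta$ also holds.

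First I would recall the observation from Section~2 that $\bdd = \dom$ implies $\Delta$, using a scale $\seq{f_\a}{\a < \dom}$ to define the assignment $x \mapsto h_x$. Next I would invoke the known consistency results cited just above the corollary: Shelah's original model with no $P$-points, and the side-by-side Silver models of Chodounsk\'y and Guzm\'an (see \cite{Shelah,Wimmers,CG}). In each of these constructions, the resulting model satisfies $\bdd = \dom = \aleph_1$ (Shelah's case) or at any rate $\bdd = \dom$ (more generally). Hence $\Delta$ holds in each such model by the scale observation, while $\MH$ fails by the preceding corollary. This gives the desired separation.

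The only thing requiring verification is that the standard ``no $P$-points'' models actually satisfy $\bdd = \dom$, so that the scale observation applies. This is not a new claim but is part of the published descriptions of those forcing constructions, so it should go through with nothing beyond a citation. Because both the Shelah model and the Chodounsk\'y--Guzm\'an models are obtained by countable support iterations (or products) of proper forcings that do not add dominating reals, $\bdd$ and $\dom$ remain equal to the ground model's value of $\aleph_1$; I would simply record this as the justification and conclude.
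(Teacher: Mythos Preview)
Your proposal is correct and follows essentially the same route as the paper: use a model with no $P$-points (Shelah's, or the Chodounsk\'y--Guzm\'an Silver extensions) to kill $\MH$, and invoke $\bdd = \dom = \aleph_1$ in those models to get $\Delta$ via the scale observation. The paper's proof is just a terser version of exactly this argument.
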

\begin{proof}
In the model of Shelah without $P$-points, or in the Silver extensions studied by Chodounsk\'y and Guzm\'an, $\bdd = \dom = \aleph_1$. As mentioned in the previous section, $\bdd = \dom$ implies $\Delta$. So these are models of $\Delta$ and not $\MH$.
\end{proof}

Our next result gives yet another instance of the failure of \MH and the non-existence of pathways, this time in a model with $P$-points. 
Thus the converse of Theorem~\ref{thm:Ppoints} does not hold: the existence of pathways is not equivalent to the existence of $P$-points.

The Sacks forcing $\SS$ is the poset of all perfect subtrees of $2^{<\w}$, ordered by inclusion. (Recall that $T \sub 2^{<\w}$ is \emph{perfect} if every node in $T$ has two incompatible extensions.) The Sacks poset is an $\w^\w$-bounding, Axiom A (hence proper) forcing. (See \cite{BL} or \cite{GQ} for a reference.) For a given cardinal $\k$, let $\SS_\k$ denote the countable support product of $\k$ copies of $\SS$. Any model obtained by forcing with $\SS_\k$, for some regular $\k > \aleph_1$, over a model of \gch is called the side-by-side Sacks model with $\continuum = \k$.

\begin{theorem}
Let $\k$ be the successor of a regular uncountable cardinal. 
There are no pathways in the side-by-side Sacks model with $\continuum = \k$.
\end{theorem}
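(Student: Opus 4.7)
The plan is to suppose $\seq{A_\a}{\a<\w_1}$ is a pathway in $V[G]$ -- the side-by-side Sacks extension of $V \models \gch$, with $G$ being $\SS_\k$-generic and $\k = \lambda^+$ for some regular uncountable $\lambda$ -- and derive a contradiction. First I would verify that any pathway in $V[G]$ has length exactly $\w_1$: since the countable support product $\SS_\k$ is $\w^\w$-bounding, the ground model $\w^\w$ remains dominating in $V[G]$, so $\bdd = \dom = \aleph_1$; combined with the observation after the definition of pathway that any pathway has length in $[\bdd,\dom]$, this forces length $\w_1$.

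The next step is to apply continuous reading of names, a well-known property of the countable support product of Sacks forcing: every real $f \in (\w^\w)^{V[G]}$ admits a minimal countable support $I(f) \sub \k$ with $f \in V[G \rest I(f)]$. Setting $J_\a = \bigcup \set{I(f)}{f \in A_\a}$, the sequence $\seq{J_\a}{\a<\w_1}$ is increasing in $\k$ with $A_\a \sub V[G\rest J_\a]$, and $\bigcup_{\a<\w_1} J_\a = \k$ since every Sacks real $s_\xi$ (coded in $\w^\w$) eventually enters some $A_\a$. The cardinal hypothesis on $\k$ now enters: because $\k = \lambda^+$ with $\lambda$ regular uncountable, if $|J_\a| \leq \lambda$ for every $\a < \w_1$ then $\card{\bigcup_\a J_\a} \leq \w_1 \cdot \lambda = \lambda < \k$, contradicting $\bigcup_\a J_\a = \k$. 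Hence there is a least $\a_0$ with $|J_{\a_0}| = \k$, and the same cardinal computation forces $\a_0$ to be a successor ordinal, $\a_0 = \b+1$, with $|J_\b| \leq \lambda$. This is the critical stage at which the pathway property must be contradicted.

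The crux, and the main obstacle, is precisely to contradict the pathway property at stage $\b+1$: we must preclude the existence of an $f \in A_{\b+1}$ unbounded over $A_\b$. The intended mechanism relies on the Sacks property of the tail forcing $\SS_{\k \setminus J_\b}$ viewed over the intermediate model $V[G \rest J_\b]$, whereby every real in $A_{\b+1}$ is trapped inside a slow-growing ground-model slalom. A fusion argument should then reflect this slalom data through the name structure of the pathway into the Turing-and-join closure of $A_\b$, producing an element of $A_\b$ that dominates $f$. The delicate point -- where most of the technical work will lie -- is that $A_\b$ need not exhaust $V[G\rest J_\b] \cap \w^\w$; only its closure under join and Turing reducibility is at our disposal, so one must show that these operations, paired with the rigidity of the Sacks property, suffice to extract the required dominating function from the generators of $A_\b$.
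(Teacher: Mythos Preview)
Your setup through the choice of $\beta$ is reasonable, but the final paragraph---where the contradiction is supposed to occur---contains a genuine gap, and I do not see how to close it along the lines you suggest. The Sacks property of the tail $\SS_{\kappa\setminus J_\beta}$ over $V[G\!\restriction\! J_\beta]$ only tells you that each $f\in A_{\beta+1}$ is dominated by some real in $V[G\!\restriction\! J_\beta]$ (in fact in $V$, by $\omega^\omega$-bounding). It says nothing about $A_\beta$. You acknowledge this, but the proposed remedy---``reflect the slalom data through the name structure of the pathway into the Turing-and-join closure of $A_\beta$''---is not a mechanism. The closure of $A_\beta$ under join and Turing reducibility is purely internal: it lets you build new elements of $A_\beta$ from old ones, but it gives no way to import a dominating function from $V[G\!\restriction\! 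J_\beta]\setminus A_\beta$. Concretely, nothing you have said rules out that $A_\beta$ is (say) the set of reals recursive in some fixed real, with $J_\beta$ countable, while $A_{\beta+1}$ is enormous; the cardinal jump in $|J_\alpha|$ places no constraint on how rich $A_\beta$ is. A fusion argument on the name for the pathway would have to interact with conditions on all $\kappa$ coordinates, and there is no evident way to extract from it a single function computable from finitely many members of $A_\beta$. (A secondary issue: your claim that $\alpha_0$ is a successor requires $J_{\alpha_0}=\bigcup_{\alpha<\alpha_0}J_\alpha$ at limits, which needs continuity of the pathway---easily arranged, but worth noting.)

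The paper's argument is organized quite differently and avoids this obstacle. Rather than looking for a stage where the support of the pathway jumps, it works directly with the $\kappa$ Sacks generic reals $x_\gamma$. Each $x_\gamma$ is coded in some $A_{\alpha_\gamma}$; a pigeonhole-plus-$\Delta$-system argument (this is where the hypothesis that $\kappa$ is the successor of a regular cardinal is actually used, via the generalized $\Delta$-system lemma under \gch) produces stationarily many $\gamma$ with a common $\alpha$, a common tree $T=p_\gamma(\gamma)$, and compatible conditions. One then passes to a $\beta\geq\alpha$ so that a fixed isomorphism $\varphi$ between $2^{<\omega}$ and the branching nodes of $T$ is coded in $A_\beta$, and fixes a ground-model $f\in A_{\beta+1}$ unbounded over $A_\beta$. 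The point is that for any such $\gamma$, the function $h_\gamma$ enumerating $\{j:(\varphi^{-1}\circ x_\gamma)(j)=1\}$ is \emph{computable from $x_\gamma$ and $\varphi$}, hence lies in $A_\beta$ already. By strengthening the condition at coordinate $\gamma$ to force $x_\gamma$ through a suitably thin subtree $\varphi[T_C]\!\downarrow$, one forces $h_\gamma\geq f$, contradicting the choice of $f$. The crucial difference from your approach is that the dominating function is not pulled in from outside: it is manufactured inside $A_\beta$ from the generic real $x_\gamma$ that was already placed there.
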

\begin{proof}
Let $\k$ be the successor of an uncountable regular cardinal, suppose $V \models \gch$, and let $G$ be a $\SS_\k$-generic filter over $V$. 
Aiming for a contradiction, suppose there is a pathway in $V[G]$. 

Recall that $\SS_\k \forces \dom = \aleph_1$, and that a pathway cannot have length $>\!\dom$. 
Thus all pathways in $V[G]$ have length $\w_1$. 
Furthermore, we claim there is a pathway $\seq{A_\a}{\a < \w_1}$ in $V[G]$ such that $A_\a$ does not dominate $A_{\a+1} \cap V$ for all $\a$. 
To this, let $\seq{A^0_\a}{\a < \w}$ be an arbitrary pathway in $V[G]$. Fix $\a < \w_1$. 
By the definition of a pathway, there is some $g \in A_{\a+1}^0$ not dominated by $A_\a^0$. 
Because $\SS_\k$ is $\w^\w$-bounding, there is some $h \in V \cap \w^\w$ with $h \not<^* g$. 
Furthermore, $h \in A_\b^0$ for some $\b$, and in fact we must have $\b > \a$ because the members of a pathway are increasing. Thus there is some $\b > \a$ such that $A_\b^0 \cap V$ is not dominated by $A_\a^0$. 
Therefore, by thinning out the sequence $\seq{A_\a^0}{\a < \w_1}$ appropriately, we can obtain a pathway $\seq{A_\a}{\a < \w_1}$ such that $A_\a$ does not dominate $A_{\a+1} \cap V$ for all $\a$. 

Fix a pathway $\< A_\a :\, \a < \w_1 \>$ in $V[G]$ such that $A_\a$ does not dominate $A_{\a+1} \cap V$ for all $\a < \w_1$. 
Also fix a corresponding sequence $\< \dot A_\a :\, \a < \w_1 \>$ of nice names in $V$. 

Like in the proof of Theorem~\ref{thm:Ppoints}, we wish to reason not only about the functions in some $A_\a$, but also about the things coded by functions in $A_\a$. 
For example, we can fix a computable bijection $\w \to 2^{<\w}$, and via this bijection, Sacks conditions $T \sub 2^{<\w}$ can be ``coded'' as subsets of $\w$, which in turn can be coded (via characteristic functions) as members of $\w^\w$. 
Likewise, a mapping between two subsets of $2^{<\w}$ can be coded as a function $\w \to \w$ in a canonical, computable way. 
Furthermore, because each $A_\a$ is closed under Turing reducibility, so are all these coded objects. For example, a subtree of $2^{<\w}$ is coded in $A_\a$ if it is computable from a bijection between two subsets of $2^{<\w}$ that is coded in $A_\a$. 
All of this will be used without further comment in what follows.

Let $\dot x_\g$ be a (nice) name for the $\SS$-generic real added by the $\g^{\mathrm{th}}$ coordinate of $\SS_\k$. 
In $V[G]$, every subset of $2^{<\w}$ is canonically coded as a function, and in particular the real $x_\g = (\dot x_\g)_G$ (which is naturally identified with a branch through $2^{<\w}$) has a code appearing in some $A_\a$. 
Thus, in $V$, there is for each $\g < \k$ some $p_\g \in \SS_\k$ and $\a_\g < \w_1$ such that $p_\g \forces \dot x_\g$ is coded in $\dot A_{\a_\g}$. 

Working in the ground model, we have $|\SS| = \aleph_1$ (by \ch). 
Because $\k$ is regular and $>\!\aleph_1$, there is some particular $T \in \SS$ and a stationary $S \sub \k$ such that $p_\g(\g) = T$ for all $\g \in S$. 
By the same reasoning, we may (and do) assume, by thinning out $S$ if necessary, that there is some particular $\a < \w_1$ with $\a_\g = \a$ for all $\g \in S$. 
Using the generalized $\Delta$-system lemma (which applies because \gch holds and $\k$ is not the successor of a singular cardinal), we may (and do) assume, by thinning out $S$ again if needed, that there is some $\bar p \in \SS_\k$ such that $p_\g \rest \g = \bar p$ for all $\g \in S$, and $(\mathrm{supp}(p_\g) \cap \mathrm{supp}(p_\dlt)) = \mathrm{supp}(\bar p)$ for all $\g,\dlt \in S$ with $\g \neq \dlt$. (In other words, the supports of the conditions $p_\g$, $\g \in S$, form a generalized $\Delta$-system of countable sets, with $\mathrm{supp}(\bar p)$ being the root of the $\Delta$-system.)

To summarize: we have a stationary $S \sub \k$, $T \in \SS$, $\a < \w_1$, and $\bar p \in \SS_\k$ such that, for all $\g \in S$,  $p_\g \forces \dot x_\g$ is coded in $A_\a$, $p_\g(\g) = T$, $p_\g \rest \g = \bar p$, and if $\g \neq \dlt \in S$ then $(\mathrm{supp}(p_\g) \cap \mathrm{supp}(p_\dlt)) = \mathrm{supp}(\bar p)$.

Let $B \sub T$ denote the set of all branching nodes of $T$: that is, $B = \set{t \in T}{ t \cat 0, t \cat 1 \in T }$. 
Fix an order-isomorphism $\varphi: 2^{<\w} \to B$. (Because $T$ is a perfect tree, $B$ is in fact order-isomorphic to $2^{<\w}$.) 
Because $\varphi$ can be canonically coded as a function in $\w^\w$, there is some $\SS_\k$-condition $\bar q \leq \bar p$ and some $\b \geq \a$ such that $\bar q \forces$ $\varphi$ is coded in $\dot A_\b$. 

Recall that, in $V[G]$, $A_\b$ does not dominate $A_{\b+1} \cap V$.  
Thus, in $V$, there is some condition $\bar r \leq \bar q$ and some function $f \in \w^\w$ such that $\bar r \forces f \in \dot A_{\b+1}$ and $f$ is not dominated by any function in $\dot A_\b$.

Because $\set{\mathrm{supp}(p_\g)}{\g \in S}$ is a (generalized) $\Delta$-system and $\mathrm{supp}(\bar r)$ is countable, $\mathrm{supp}(p_\g) \cap \mathrm{supp}(\bar r) = \mathrm{supp}(\bar p)$ for all but countably many $\g \in S$. Thinning out $S$ one last time, let us suppose $\mathrm{supp}(p_\g) \cap \mathrm{supp}(\bar r) = \mathrm{supp}(\bar p)$ for all $\g \in S$. 
Note that this implies $p_\g$ and $\bar r$ have a common extension (namely $(p_\g \setminus p_\g \rest \mathrm{supp}(\bar p)) \cup \bar r$) for all $\g \in S$.

Fix $\g \in S$. Let $\dot y_\g$ be a nice name for the function $\varphi^{-1} \circ x_\g$ in $V[G]$. In other words, $\dot y_\g$ is a name for an element of $2^\w$ which reveals, via $\varphi$, the way in which the $\SS$-generic real $x_\g$ traces on $B$. 
Any common extension of $\bar r$ and $p_\g$ forces that all of $\dot x_\g,\varphi,\varphi^{-1},\dot y_\g$ are coded in $\dot A_\b$. (For $\dot x_\g$, this is true because $p_\g \forces$ $\dot x_\g$ is coded in $\dot A_\a$ and $\dot A_\a \sub \dot A_\b$; for $\varphi$ this is true because $\bar r \forces$ $\varphi$ is coded in $\dot A_\b$; for $\varphi^{-1}$ and $\dot y_\g$, this is true because, in the extension, (codes for) $\varphi^{-1}$ and $y_\g$ are computable from (codes for) $\varphi$ and $x_\g$.) 

In $V[G]$, let $I_\g = \set{j}{y_\g(j) = 1} = y_\g^{-1}(1)$ and let $h_\g(n)$ denote the $n^\mathrm{th}$ element of $I_\g$. 
Let $\dot I_\g$ and $\dot h_\g(n)$ be nice names for these two objects in $V$. 
Because $I_\g$ and $h_\g$ are computable from $y_\g$, any common extension of $\bar r$ and $p_\g$ forces $\dot I_\g,\dot h_\g \in$ are coded in $A_\b$ (as well as $\dot x_\g,\varphi,\varphi^{-1},\dot y_\g$). 

Given $C \in [\w]^\w$, let 
$T_C \,=\, \set{t \in 2^{<\w}}{t^{-1}(1) \sub C}.$ 
In other words, $T_C$ is the tree that branches to $0$ at every node, and branches also to $1$ at (and only at) levels in $C$. 
Let $\varphi[T_C] \!\downarrow$ denote the downward closure of the image of $T_C$ under $\varphi$. (So, for example, $T_\w = 2^{<\w}$ and $\varphi[T_\w] \!\downarrow \ = B\!\downarrow\ = T$.) Note that $\varphi[T_C] \!\downarrow\, \in \SS$ and $\varphi[T_C] \!\downarrow\ \leq T$ (as $\SS$-conditions).

Because $f \in V$, there is some infinite $C \sub \w$ with $C \in V$ such that the $n^\mathrm{th}$ element of $C$ is $\geq\!f(n)$.  
For any $\g \in S$, observe that
$$s \,=\, (p_\g \setminus \bar p) \cup \bar r \cup (\g,\varphi[T_C]\!\downarrow)$$
is a common extension of $p_\g$ and $\bar r$. 
Because $s(\g) = \varphi[T_C]\!\downarrow\,$, $s$ forces that $\dot x_\g$ is a branch through the tree $\varphi[T_A]\!\downarrow\,$. But for any branch $b$ through $\varphi[T_C]\!\downarrow\,$, we can have $\varphi^{-1} \circ b(n) = 1$ only if $n \in C$ (by the definition of $\varphi[T_C]\!\downarrow\,$). Thus $s \forces \dot I_\g \sub C$. But if $I_\g \sub C$, then the $n^\mathrm{th}$ element of $I_\g$ is even bigger than the $n^\mathrm{th}$ element of $C$, which in turn is $\geq\!f_{\b+1}(n)$. Thus, in $V[G]$, the function $h_\g$ enumerating $I_\g$ dominates $f$. Hence $s \forces \dot h_\g \geq f$.
But $f$ is unbounded over $A_\b$ in $V[G]$, and $r \forces \dot h_\g \in \dot A_\b$. Contradiction!
\end{proof}

Let us note in passing that, with a little more work, the above proof can be modified to give the same conclusion in the iterated Sacks model. 

\begin{corollary}
The existence of $P$-points does not imply the existence of pathways.
\end{corollary}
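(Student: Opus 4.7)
The plan is to combine the previous theorem with a standard preservation result. The previous theorem already gives us a model --- the side-by-side Sacks model $V[G]$ with $\continuum = \k$, for $\k$ the successor of a regular uncountable cardinal, over a ground model $V$ of \gch --- in which no pathway exists. To obtain the corollary, it suffices to verify that $P$-points still exist in this model.

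First I would start in the ground model $V$, where \ch (a consequence of \gch) implies the existence of a $P$-point $\U \in \w^*$. The plan is then to argue that $\U$ continues to generate a $P$-point in $\w^*$ in the extension $V[G]$. The key ingredient is the well-known fact that Sacks forcing $\SS$, and more generally its countable support product $\SS_\k$, has the Sacks property, and in particular is $\w^\w$-bounding and preserves $P$-points. (See \cite{BL} or \cite{GQ} for the standard treatments; preservation of $P$-points by $\SS_\k$ under countable support products is by now a textbook fact.)

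Combining this with the previous theorem: in $V[G]$, the filter generated by $\U$ is a $P$-point of $\w^*$, yet no pathway of any length exists. This exhibits a model of \zfc in which $P$-points exist but pathways do not, which is the desired separation.

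The only conceptual obstacle is the preservation statement, but this is a standard consequence of the Sacks property of $\SS_\k$, and no new work is required. The remainder of the argument is a one-line assembly of the previous theorem and this preservation fact.
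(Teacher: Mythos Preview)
Your argument has a genuine gap. You assert that the countable support product $\SS_\k$ preserves $P$-points from the ground model, calling this a ``textbook fact.'' But it is not: as the paper notes immediately after its own proof, it is an \emph{open question} whether every ground model $P$-point continues to generate a $P$-point after forcing with $\SS_\k$. The preservation result you have in mind holds for the \emph{iterated} Sacks forcing, but the side-by-side product is a different matter. The Sacks property (or $\w^\w$-bounding) alone is not enough to guarantee $P$-point preservation; what one typically needs is something like the Laver property together with a suitable iteration theorem, and for products the situation is more delicate.

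The paper instead cites Laver \cite{Laver}, who did not prove a general preservation theorem for $\SS_\k$ but rather constructed \emph{specific} $P$-points in the ground model that he could show survive the product forcing. So the correct fix is not to invoke a blanket preservation result, but to appeal to Laver's construction: there exist $P$-points in the side-by-side Sacks model, even though we do not know that an arbitrary ground-model $P$-point is among them.
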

\begin{proof}
By the previous theorem, it suffices to note that there are $P$-points in the side-by-side Sacks models. This was proved by Laver in \cite{Laver}. 
\end{proof}

Interestingly, Laver's argument in \cite{Laver} does not show that all $P$-points from the ground model are preserved by $\SS_\k$ (although this is true for the iterated Sacks poset). Instead, Laver constructs specific $P$-points in the ground model that are preserved by $\SS_\k$. It is an open question whether all ground model $P$-points generate $P$-points in side-by-side Sacks models. 

Note that neither of the results from this section addresses the question of whether it is consistent for $\Delta$ to fail. In fact, $\bdd = \dom = \aleph_1$ in every known model without $P$-points, and in the side-by-side and iterated Sacks models. Because $\bdd = \dom$ implies $\Delta$, all the models considered in this section satisfy $\Delta$.

\begin{question}[Roitman]
Is $\Delta$ a theorem of \zfc?
\end{question}

\noindent Because both $\bdd = \dom$ and $\dom = \continuum$ imply $\Delta$, any model in which $\Delta$ fails must satisfy $\bdd < \dom < \continuum$, and in particular it must satisfy $\continuum \geq \aleph_3$. On the one hand, this means that a countable support iteration of proper posets is not useful for solving the problem. On the other hand, the results in the next section make it seem doubtful that a ccc poset could be useful either.

\section{Finding pathways in forcing extensions}\label{sec:ccc}

Roitman proved in \cite{Roitman2} that $\Delta$ implies $\nabla (\w+1)^\w$ is paracompact. As mentioned in Section 2, later work of the first author and Gartside in \cite{BAG} shows that $\Delta$ is in fact equivalent to the paracompactness of $\nabla (\w+1)^\w$.

Roitman also proved in \cite{Roitman1} that following any ccc forcing iteration with uncountable cofinality, $\nabla (\w+1)^\w$ is paracompact (even more: she showed $\nabla_{n \in \w}X_n$ is paracompact whenever each $X_n$ is compact). As we now know $\Delta$ is equivalent to the paracompactness of $\nabla (\w+1)^\w$, this means that $\Delta$ holds in such forcing extensions. 

In this section we strengthen Roitman's result in two ways: by strengthening the conclusion from $\Delta$ to the existence of pathways, and by extending the class of posets for which this conclusion holds. 

Given a ccc poset $\PP$, let $\MH(\PP)$ denote the following statement:
\begin{itemize}
\item[$\MH(\PP)\!: \hspace{-1.75mm}$] \hspace{1mm} For some uncountable cardinal $\k$, there is an increasing sequence $\seq{M_\a}{\a < \k}$ of transitive models of $\zfc^-$ such that $\PP \sub \bigcup_{\a < \k}M_\a$, 
$\bigcup_{\a < \k}M_\a$ is countably closed, and for all $\a < \k$,
\begin{itemize}
\item[$\circ$] $M_{\a+1} \cap \PP$ is not dominated by $M_\a \cap \w^\w$, 
\item[$\circ$] $\PP_\a = M_\a \cap \PP \in M_\a$, and $\PP_\a \embeds \PP$, and
\item[$\circ$] $M_\a$ witnesses that $M_\a \cap \PP = \PP_\a$ is $\w^\w$-bounding, in the sense that for every nice name $\dot f$ for a function in $\w^\w$ with $\dot f \in M_\a$, there is some $g \in M_\a \cap \w^\w$ such that $\forces_{\PP_\a} \dot f <^* g$.
\end{itemize}
\end{itemize}

Note that $\MH(\PP)$ implies $\PP$ is $\w^\w$-bounding. This is because if $\dot f$ is a (nice) name for a function in $\w^\w$, then because $\PP$ is ccc, $\PP \sub \bigcup_{\a < \k}M_\a$, and $\bigcup_{\a < \k}M_\a$ is countably closed, we get $\dot f \in \bigcup_{\a < \k}M_\a$. Because $M_\a \cap \PP \embeds \PP$ and 
$M_\a$ witnesses that $M_\a \cap \PP$ is $\w^\w$-bounding, this implies that the evaluation of $\dot f$ in an extension will be dominated by some function in the ground model. 
Hence $\MH(\PP)$ may be thought of as a strong version of $\w^\w$-boundedness. 

The insistence that the $M_\a$ be transitive is not strictly necessary. It is fine, for example, if the $M_\a$ are elementary submodels of some $H(\theta)$. (If so, then identifying the members of $\PP$ with ordinals $<\!\mu = |\PP|$ and replacing each $M_\a$ with its transitive collapse does not change any other aspect of the definition. In this sense, an elementary submodels version of the definition implies the stated version.) What is really needed is that the $M_\a$ agree with $V$ on what $\w$ and $\w^\w$ are, which can fail in non-standard models.

\begin{theorem}\label{thm:AlmostAllCCC}
If $\PP$ is a ccc poset and $\MH(\PP)$ holds, then $\PP$ forces that pathways exist.
\end{theorem}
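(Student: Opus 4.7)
The plan is to manufacture the desired pathway in $V[G]$ directly from the models witnessing $\MH(\PP)$. Fix a sequence $\seq{M_\a}{\a < \k}$ witnessing $\MH(\PP)$, with associated complete subposets $\PP_\a = M_\a \cap \PP$, and let $G$ be $\PP$-generic over $V$. In $V[G]$, I would define
\[
A_\a \;=\; \set{(\dot f)_G}{\dot f \in M_\a \text{ is a nice } \PP_\a\text{-name for an element of } \w^\w}
\]
for each $\a < \k$, and then verify that $\seq{A_\a}{\a < \k}$ is a pathway. One observation that would be used throughout is that, by transitivity of $M_\a$, every nice $\PP$-name in $M_\a$ for an element of $\w^\w$ is automatically a nice $\PP_\a$-name: each of its countable antichains lies in $M_\a \cap \PP = \PP_\a$, and since $\PP_\a \embeds \PP$, it remains maximal in $\PP_\a$.

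The covering property $\bigcup_{\a < \k} A_\a = V[G] \cap \w^\w$ then reduces to countable closure: any $f \in V[G] \cap \w^\w$ has a nice $\PP$-name $\dot f$ which, being countable by the ccc, lies in some $M_\a$, and the observation above places $f$ in $A_\a$. The sequence is increasing because $\dot f \in M_\a$ also lies in $M_{\a+1}$ and can be refined there, inside the $\zfc^-$ model $M_{\a+1}$, to an equivalent nice $\PP_{\a+1}$-name. Closure under the join operation and closure under Turing reducibility follow from $\zfc^-$-definability inside $M_\a$: a nice name for $\dot f \vee \dot g$ is constructed from $\dot f, \dot g \in M_\a$, and a nice name for $\Phi_e^{\dot f}$ is constructed from $\dot f$ and the Turing index $e \in \w \sub M_\a$.

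The non-domination clause is where the $\w^\w$-bounding witness enters. For any nice $\PP_\a$-name $\dot f \in M_\a$, the witness supplies $h \in M_\a \cap \w^\w$ with $\forces_{\PP_\a} \dot f <^* \check h$; and since $\PP_\a \embeds \PP$, this lifts to $\forces_\PP \dot f <^* \check h$. Hence every member of $A_\a$ is $\leq^*$-dominated in $V[G]$ by a function in $M_\a \cap \w^\w$. Reading the first bullet of $\MH(\PP)$ in the natural way, some $f \in M_{\a+1} \cap \w^\w$ is not $\leq^*$-dominated by any function in $M_\a \cap \w^\w$; the canonical name $\check f \in M_{\a+1}$ places $f \in A_{\a+1}$, so $A_\a$ does not dominate $A_{\a+1}$.

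The main obstacle is the bookkeeping around names and the subposet relation $\PP_\a \embeds \PP$: the argument relies on a two-way translation between nice $\PP$-names lying in $M_\a$ and nice $\PP_\a$-names, together with the reflection of forcing statements from $\PP_\a$ up to $\PP$. Once these standard but finicky passages are in place, every pathway clause reduces to a syntactic construction inside one of the $\zfc^-$ models $M_\a$, and no further combinatorial input is required.
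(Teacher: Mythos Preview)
Your proposal is correct and follows the same overall architecture as the paper: define $A_\a$ as the set of evaluations of nice names lying in $M_\a$, then verify each pathway clause. Your observation that nice $\PP$-names in $M_\a$ coincide with nice $\PP_\a$-names (via transitivity and $\PP_\a \embeds \PP$) is a clean way to set things up, and the covering, monotonicity, join, and non-domination arguments are essentially identical to the paper's.

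The one real point of divergence is the Turing-reducibility clause. You dispatch it by pure $\zfc^-$-definability inside $M_\a$: from $\dot g$ and $e$ form the canonical $\PP_\a$-name for $\Phi_e^{\dot g}$. This works, but note that this name is not automatically a nice name \emph{for a total function}, since some conditions may force $\Phi_e^{\dot g}(n)\!\uparrow$; you need to totalize (pad with $0$'s below such conditions), and this totalized name is still definable in $M_\a$ and still evaluates to $f$ in $V[G]$ because $\Phi_e^g$ is total there. The paper instead invokes the $\w^\w$-bounding witness a second time: it bounds the use function of the reduction by some $\bar h \in M_\a \cap \w^\w$ and builds the name from conditions deciding $\dot g \!\restriction\! \bar h(n)$. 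Your route is arguably more direct and shows that the bounding clause is needed only for the non-domination step; the paper's route is more explicit about finite use, which is the point they later emphasize when explaining why the argument does not upgrade to full $\MH$.
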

\begin{proof}
Let $\PP$ be a ccc poset and suppose $\MH(\PP)$ holds in the ground model $V$. 
Let $G$ be a $\PP$-generic filter over $V$. 

Fix a sequence $\seq{M_\a}{\a < \k}$ in $V$ witnessing $\MH(\PP)$. 
For each $\a < \k$, let
$$A_\a \,=\, \set{(\dot f)_G}{\dot f \in M_\a \text{ and $\dot f$ is a nice $\PP$-name for a function $\w \to \w$}}.$$
We claim that $\seq{A_\a}{\a < \k}$ is a pathway in $V[G]$. 

As the sequence $\seq{M_\a}{\a < \k}$ is increasing, $\seq{A_\a}{\a < \k}$ is too. 

If $f \in \w^\w$ in $V[G]$, then there is a nice name $\dot f$ for $f$ in $V$. 
Because $\PP$ is ccc, $\dot f$ consists of countably many pairs of the form $((m,n),p)$ with $p \in \PP$. 
Because $\PP \sub \bigcup_{\a < \k}M_\a$ and $\bigcup_{\a < \k}M_\a$ is countably closed, $\dot f \in \bigcup_{\a < \k}M_\a$. This implies $f \in A_\a$ for some $\a < \k$. 
As $f$ was arbitrary, $\bigcup_{\a < \k}A_\a = \w^\w$.

Next, fix $\a < \k$. 
Because $M_{\a+1} \cap \w^\w$ is not dominated by $M_\a \cap \w^\w$, there is some $g \in M_{\a+1} \cap \w^\w$ such that $g \not<^* h$ for all $h \in M_\a \cap \w^\w$.
If $f \in A_\a$, there is some nice name $\dot f \in M_\a$ with $(\dot f)_G = f$. 
Because $M_\a$ witnesses that $\PP_\a$ is $\w^\w$-bounding, there is some $h \in M_\a \cap \w^\w$ such that $\forces_{\PP_\a} \dot f <^* h$. 
Because $\PP_\a \embeds \PP$, this means $\forces_\PP \dot f <^* h$. Thus in the extension, $f <^* h$, which implies $g \not<^* f$. As $f$ was an arbitrary member of $A_\a$, this means $g$ witnesses that $A_{\a+1}$ is not dominated by $A_\a$. 

Now suppose $f,g \in A_\a$. This means there are nice names $\dot f$ and $\dot g$ for $f$ and $g$ in $M_\a$. But then
$$\set{((2i,j),p)}{((i,j),p) \in \dot f} \cup \set{((2i-1,j),p)}{\vphantom{\dot f}((i,j),p) \in \dot g}$$
is a nice name for $f \vee g$, and this name is in $M_\a$ because it is definable from $\dot f$ and $\dot g$ and $M_\a \models \zfc^-$. 
Hence $f \vee g \in A_\a$. 

Finally, suppose $g \in A_\a$ and $f \in \w^\w$ is Turing reducible to $g$. 
This means $f$ is computable from $g$, in the sense that there is an oracle Turing machine $T$ that, when using $g$ as an oracle, outputs $f(n)$ on input $n$ for all $n \in \w$. 
For any given $n$, there is some $h(n)$ large enough that $T$ computes $f(n)$ in $\leq\! h(n)$ steps. 
In particular, $T$ does not read more than the first $h(n)$ values in $g$, and if $g'$ is any function with $g \rest h(n) = g' \rest h(n)$, then $T$ will correctly compute $f(n)$ using $g'$ as an oracle instead of $g$. 
For any given finite sequence $\seq{k_i}{i \leq n}$ of natural numbers, define a function $G_{\seq{k_i}{i \leq n}}: \w \to \w$ by 
$$G_{\seq{k_i}{i \leq n}}(i) \,=\, \begin{cases}
k_i &\text{ if } i \leq n \\
0 &\text{ if } i > n.
\end{cases}$$
Because $M_\a$ witnesses that $\PP_\a$ is $\w^\w$-bounding, there is some ground model function $\bar h \in M_\a \cap \w^\w$ such that $h(n) \leq \bar h(n)$ for all $n$. 
Thus, letting $\dot g \in M_\a$ be some nice name for $g$,  
\begin{align*}
\Big\{ \, ((n,m),p) \,:\ \, &p \in \PP_\a \text{ decides $\seq{\dot g(i)}{i < \bar h(n)} = \seq{k_i}{i < \bar h(n)}$, and } \\
& T \text{ outputs } m \text{ with input } n \text{ using $G_{\seq{k_i}{i < \bar h(n)}}$ as an oracle} \, \Big\}
\end{align*}
is a name for $f$. 
(This uses the fact that $\PP_\a \embeds \PP$, which implies that the members of $\PP_\a$ that decide $\seq{\dot g(i)}{i < \bar h(n)}$ form a dense subset of $\PP$.) 
This name is in $M_\a$, because it is definable (as above) from $\dot g$, $\bar h$, $T$, and $\PP$, all of which are in $M_\a$. 
Because $M_\a \models \zfc^-$, there is also a nice name for $f$ in $M_\a$. 
Hence $f \in A_\a$, and this shows $A_\a$ is closed under Turing reducibility.
\end{proof}

It is unclear whether the existence of pathways can be improved to \MH in the conclusion of this theorem. The problem is that \MH requires the $A_\a$ to be closed under set-theoretic definability, not merely Turing reducibility. Our proof explicitly relies on the fact that if $f \leq_T g$, then any given entry of $f$ can be computed by knowing some finitely many entries of $g$. The same idea does not work for set-theoretic definability, where some of the entries of $f$ might depend somehow on infinitely many entries of $g$. 

\begin{question}
If $\PP$ is a ccc poset and $\MH(\PP)$ holds, does $\PP$ forces \MH?
\end{question}

Of course, Theorem~\ref{thm:AlmostAllCCC} raises the question: Under what circumstances, and for which posets $\PP$, does $\MH(\PP)$ hold? 
We shall be particularly (but not exclusively) interested in this question when $\PP$ is the measure algebra of weight $\mu$, the standard poset for adding $\mu$ random reals simultaneously. 
For technical reasons (that become clear in the proof of Theorem~\ref{thm:RandomPoset}), we define the members of $\BB_\mu$ to be Borel codes, rather than Borel sets or equivalence classes of Borel sets.

\begin{lemma}\label{lem:RandomLemma}
For any cardinal $\mu$, $\MH(\BB_\mu)$ if, 
for some uncountable cardinal $\k$, there is an increasing sequence $\seq{M_\a}{\a < \k}$ of transitive models of $\zfc^-$ such that $\mu \in M_0$, 
$\BB_\mu \sub \bigcup_{\a < \k}M_\a$, 
$\bigcup_{\a < \k}M_\a$ is countably closed, and 
$M_{\a+1} \cap \w^\w$ is not dominated by $M_\a \cap \w^\w$ 
for all $\a < \k$. 
In other words, the last two items in the definition of $\MH(\BB_\mu)$ are automatic, in the sense that when $\PP = \BB_\mu$, they follow already from the previous conditions (plus the condition that $\mu \in M_0$).
\end{lemma}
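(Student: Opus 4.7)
The plan is to verify, for $\PP = \BB_\mu$, the two conditions in the definition of $\MH(\PP)$ labelled automatic -- that $\PP_\a \in M_\a$ together with $\PP_\a \embeds \BB_\mu$, and that $M_\a$ witnesses $\PP_\a$ is $\w^\w$-bounding -- using throughout the absoluteness of Borel codes and Borel measures between transitive $\zfc^-$ models containing $\mu$.

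Because the members of $\BB_\mu$ are by definition Borel codes and the predicate ``$c$ is a Borel code for a positive-measure subset of $2^\mu$'' is absolute between transitive $\zfc^-$ models containing $\mu$, one immediately gets $\PP_\a = M_\a \cap \BB_\mu = (\BB_\mu)^{M_\a}$, and this is a set defined inside $M_\a$, so $\PP_\a \in M_\a$. For the complete embedding, let $A \sub \PP_\a$ be a maximal antichain in $V$. The measure algebra is provably ccc in $\zfc^-$, so $A$ is countable; by countable closure of $\bigcup_\b M_\b$, the set $A$ lies in some $M_\b$ with $\b \geq \a$. Since $\mu \in M_\a$ forces $\mu \sub M_\a$ by transitivity, every basic cylinder on $2^\mu$ has a Borel code in $M_\a$, so $\PP_\a$ contains a family generating $\BB_\mu$ as a Boolean $\sigma$-algebra. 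I would then argue inside $M_\b$ that $\bigvee A = 1_{\BB_\mu}$: if not, the complement of $\bigcup A$ is a positive Borel set, and combining the generating property of the cylinders with closure of $\PP_\a$ under $M_\a$-internal countable Boolean operations yields a positive element of $\PP_\a$ sitting below the complement, contradicting maximality of $A$. Absoluteness of the measure of a countable union of coded Borel sets then transfers $\bigvee A = 1$ from $M_\b$ to $V$, so $A$ is maximal in $\BB_\mu$ as required.

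For the $\w^\w$-bounding clause, the textbook proof that the measure algebra is $\w^\w$-bounding -- given a nice name $\dot f$ for an element of $\w^\w$, pick for each $n$ a finite $F_n \sub \w$ with $\mu\bigl(\bigvee_{m \in F_n}\tr{\dot f(n) = m}\bigr) > 1 - 2^{-n-1}$, set $g(n) = \max F_n + 1$, then verify via Borel--Cantelli that $\tr{\dot f <^* g} = \1$ -- is formalizable entirely in $\zfc^-$ using nothing beyond measures of Borel sets coded in $M_\a$. Consequently $M_\a$ proves internally that every nice $\PP_\a$-name $\dot f \in M_\a$ for an element of $\w^\w$ is dominated by some $g \in M_\a \cap \w^\w$ with $\forces_{\PP_\a} \dot f <^* g$, which is exactly the witnessing clause that $\MH(\BB_\mu)$ requires.

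The main obstacle is the maximal-antichain step: converting the generating property of the cylinder sets into a genuine positive element of $\PP_\a$ lying below the complement of $\bigcup A$. This relies delicately on the countable closure of $\bigcup_\b M_\b$ (so that $A$ and hence a code for $\bigvee A$ sit inside some transitive $M_\b$ where internal measure-theoretic arguments apply), together with the $\zfc^-$-provable inner regularity of Borel measure. Once that element is produced, the remaining absoluteness transfers between $M_\b$ and $V$ are routine.
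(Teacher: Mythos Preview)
Your arguments for $\PP_\a \in M_\a$ and for the $\w^\w$-bounding clause are correct and essentially the same as the paper's: both rest on the absoluteness of Borel-code predicates between transitive $\zfc^-$ models containing $\mu$, and on the observation that the standard dominating function $g$ is definable inside $M_\a$ from the nice name $\dot f$.

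For the complete-embedding step, however, your route is more circuitous than the paper's, and the detour introduces exactly the difficulty you flag as ``the main obstacle.'' You use countable closure of $\bigcup_\b M_\b$ to place $A$ into some $M_\b \supseteq M_\a$, and then try to produce a positive element of $\PP_\a = M_\a \cap \BB_\mu$ below $\neg\bigvee A$ via the cylinder-generating property and ``closure of $\PP_\a$ under $M_\a$-internal countable Boolean operations.'' But this last step does not go through as written: a positive set below $\neg\bigvee A$ is indeed a countable Boolean combination of cylinders, but there is no reason the indexing sequence of that combination should lie in $M_\a$, so $M_\a$-internal closure buys you nothing. Inner regularity gives you a closed positive subset of $\neg\bigvee A$, but its defining tree sits in $M_\b$, not in $M_\a$.

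The paper's argument sidesteps this entirely. Rather than trying to construct an explicit element of $\PP_\a$ below the complement, it observes that since $M_\a \models \zfc^-$ and the measure of a coded Borel set is absolute, maximality of $A$ in $M_\a \cap \BB_\mu$ is equivalent to the purely arithmetical condition $\sum_{a \in A}\lambda(a) = 1$; and this sum is computed the same way in $V$ as in $M_\a$, so maximality in $\BB_\mu$ follows at once. There is no passage to a larger $M_\b$, no generating-set argument, and no use of the countable-closure hypothesis for this clause.
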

\begin{proof}
Suppose $\seq{M_\a}{\a < \k}$ is a sequence of models having the properties stated in the lemma. 
We must check that $M_\a \cap \BB_\mu \in M_\a$, that $M_\a \cap \BB_\mu \embeds \BB_\mu$, and that $M_\a \cap \BB_\mu$ witnesses that $M_\a \cap \BB_\mu$ is $\w^\w$-bounding. 

Fix $\a < \k$. 
Because $\mu \in M_0 \sub M_\a$, we have $\mu \in M_\a$. Thus $\BB_\mu$ is definable as the set of all Borel codes for members of $2^\mu$, and this definition is absolute for transitive models containing $\mu$. Hence $M_\a \cap \BB_\mu \in M_\a$.

To see that $M_\a \cap \BB_\mu \embeds \BB_\mu$, suppose $A$ is a maximal antichain in $M_\a \cap \BB_\mu$. This means that $A$ consists of codes for non-null Borel sets, with the codes in $M_\a$, such that (1) any two of these Borel sets intersect in a null set, and (2) if any other Borel set coded in $M_\a$ has null intersection with all these Borel sets, it is null. 
Because $M_\a \models \zfc^-$, and because basic facts about Borel codes (like the measure of the set they code) are absolute between models of $\zfc^-$, condition (2) is equivalent to ($2'$) the sum of the measures of these Borel sets is equal to $1$. 
But $(1)$ and $(2')$ together imply that $A$ is a maximal antichain in $\BB_\mu$, not just in $M_\a \cap \BB_\mu$. 

To see that $M_\a$ witnesses that $M_\a \cap \BB_\mu$ is $\w^\w$-bounding, let $\dot f$ be a nice $(M_\a \cap \BB_\mu)$-name for a function in $\w^\w$. 
That is, 
$$\dot f = \set{((m,n),p_{m,n})}{m,n \in \w},$$ 
where each $p_{m,n} \in M_\a \cap \BB_\mu$. 
Going through the usual proof that $\BB_\mu$ is $\w^\w$-bounding, we obtain a ground model function
$$g(n) \,=\, \min \set{k}{\textstyle \sum_{i < k} \lambda(p_{n,i}) > 1 - \nicefrac{1}{4^n}}$$
(where $\lambda$ denotes the Lebesgue measure) such that $\forces_{\BB_\mu} \dot f <^* g$. 
But this function $g$ is definable from $\dot f$, so $g \in M_\a$. Moreover, the usual proof that $\forces_{\BB_\mu} \dot f <^* g$ can be carried out in $M_\a$, and this adaptation of the proof shows that $\forces_{M_\a \cap \BB_\mu} \dot f <^* g$. 
\end{proof}

Note that if $\mu$ is a definable cardinal with a ``nice'' definition that is absolute to transitive models (e.g., if $\mu = \aleph_n$ for some $n$, or $\mu = \aleph_{\w_1+\w^2+49}$), then the condition $\mu \in M_0$ is superfluous, and the statement of the lemma can be strengthened from ``if'' to ``if and only if''.

\begin{theorem}\label{thm:RandomPoset}
Suppose $\PP$ is a forcing iteration (with any support) of length $\lambda$, where $\mathrm{cf}(\lambda) > \w$. Furthermore, suppose $\PP$ adds unbounded reals at cofinally many stages of the iteration. In any forcing extension by $\PP$, if $\BB_\mu$ denotes the measure algebra of weight $\mu$, then $\MH(\BB_\mu)$ holds.
\end{theorem}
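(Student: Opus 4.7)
The plan is to invoke Lemma~\ref{lem:RandomLemma}, which reduces the proof of $\MH(\BB_\mu)$ in $V[G]$ to exhibiting there an increasing sequence $\seq{M_\a}{\a < \k}$ of transitive models of $\zfc^-$ with $\mu \in M_0$, $\BB_\mu \sub \bigcup_{\a < \k} M_\a$, $\bigcup_{\a<\k} M_\a$ countably closed, and with $M_{\a+1} \cap \w^\w$ not dominated by $M_\a \cap \w^\w$ for each $\a$. I would take $\k = \cf(\lambda)$, regular uncountable by hypothesis, and use the ``unbounded reals added cofinally often'' assumption to fix in $V$ a strictly increasing cofinal sequence $\seq{\beta_\a}{\a < \k}$ in $\lambda$ together with names $\dot r_\a$ forced by $\1_\PP$ to be reals in $V[G_{\beta_\a+1}]$ that are unbounded over $V[G_{\beta_\a}]$.

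Next, in $V[G]$, pick a regular cardinal $\theta > |\PP| + \mu^{\aleph_0}$; such a $\theta$ remains regular and uncountable in every intermediate extension $V[G_\gamma]$ with $\gamma \leq \lambda$. Define
\[ M_\a \,=\, H(\theta)^{V[G_{\beta_\a + 1}]}. \]
Standard facts give that each $M_\a$ is a transitive model of $\zfc^-$, the sequence is increasing, $\mu \in M_0$, and $M_\a \cap \w^\w = \w^\w \cap V[G_{\beta_\a + 1}]$. The failure-of-domination condition is then immediate: $r_{\a+1}$ lies in $M_{\a+1} \cap \w^\w$ and is unbounded over $V[G_{\beta_{\a+1}}] \supseteq V[G_{\beta_\a + 1}]$, hence unbounded over $M_\a \cap \w^\w$.

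The two remaining conditions -- $\BB_\mu \sub \bigcup_{\a < \k} M_\a$ and countable closure of the union -- both reduce to the claim that every countable subset of $V[G]$ belongs to some $V[G_\gamma]$ with $\gamma < \lambda$. Granting this, (i) each Borel code in $\BB_\mu^{V[G]}$ is a hereditarily countable set coded from parameters in $\mu \in V$, so as a countable set it lies in some $V[G_\gamma]$ and hence, because the $\beta_\a + 1$'s are cofinal in $\lambda$, in some $M_\a$; and (ii) any countable sequence $\seq{x_n}{n \in \w} \in V[G]$ of members of $\bigcup_{\a < \k} M_\a$ is itself a countable subset of $V[G]$, so it likewise lies in some $M_\a$.

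The technical heart of the proof is establishing the claim about countable subsets of $V[G]$. I would establish this by a nice-name argument: a countable subset of $V[G]$ has a nice $\PP$-name which is itself a countable object in $V$ invoking only countably many conditions of $\PP$. Since $\cf(\lambda) > \w$ and for any of the standard notions of support each condition has its support bounded strictly below $\lambda$, the supports of these countably many conditions remain bounded by some $\gamma < \lambda$, which places the named object in $V[G_\gamma]$. This is the one delicate step; once it is in hand, Lemma~\ref{lem:RandomLemma} together with the verifications above delivers $\MH(\BB_\mu)$ in $V[G]$.
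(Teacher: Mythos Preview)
Your approach is essentially identical to the paper's: take $M_\a = H(\theta)$ of a cofinal sequence of intermediate extensions $V[G_{\gamma_\a}]$, verify the hypotheses of Lemma~\ref{lem:RandomLemma} directly, and use the cofinally-added unbounded reals for the non-domination clause. The paper takes $\theta = \mu^+$ where you take a larger regular $\theta$, and its indexing of the cofinal sequence is arranged slightly differently, but these are cosmetic.

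One caveat on your ``delicate step'': you assert that a nice $\PP$-name for a countable set is itself a countable object invoking only countably many conditions, but this needs $\PP$ to be ccc, which the theorem does not assume. The fact you actually need---that every countable set of ordinals in $V[G]$ lies in some $V[G_\gamma]$---follows instead from the iteration taking a direct limit at $\lambda$, which is exactly what your bounded-support remark (``for any of the standard notions of support each condition has its support bounded strictly below $\lambda$'') guarantees when $\cf(\lambda) > \w$. The paper glosses over this point with the bare parenthetical ``because $\cf(\k) > \w$'', so you are in good company; just replace the countable-nice-name claim with the direct-limit observation.
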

\begin{proof}
Let $\PP$ be a forcing iteration as described in the statement of the theorem, and let $G$ be a generic for $\PP$. 
For each $\g < \lambda$, let $G_\g$ denote the restriction of $G$ to only the first $\g$ stages of the iteration.  

Let $\k = \mathrm{cf}(\lambda)$, and by recursion obtain a $\k$-sequence $\seq{\g_\a}{\a < \k}$ of ordinals $<\!\lambda$ such that $\seq{\g_\a}{\a < \k}$ is cofinal in $\lambda$, and for all $\a < \k$ there is an unbounded real added at stage $\xi$ of the iteration for some $\xi \in (\g_\a,\g_{\a+1}]$. 
In $V[G]$, define $M_\a = H(\mu^+)^{V[G_{\g_\a}]}$ for all $\a < \k$. 
We claim $\seq{M_\a}{\a < \k}$ witnesses $\MH(\BB_\mu)$.

Clearly $\mu \in M_0$. The sequence of $M_\a$'s is increasing because the sequence of $V[G_{\g_\a}]$'s is increasing.  
Because each $V[G_{\g_\a}]$ is a model of \zfc, we have $V[G_{\g_\a}] \models (H(\mu^+) \models \zfc^-)$. By the absoluteness of the satisfaction relation, $M_\a \models \zfc^-$ for all $\a$. 
Every code for a Borel set in $\BB_\mu$ (depending on one's choice of coding) is essentially a countable partial function $\mu \times \w \to \w$, and every such countable partial function appears in $V[G_\g]$ for some $\g < \k$ (because $\mathrm{cf}(\k) > \w$). Thus $\BB_\mu \sub \bigcup_{\a < \k}M_\a$. 
Similar reasoning shows that $\bigcup_{\a < \k}M_\a$ is countably closed. 
Finally, $M_{\a+1} \cap \w^\w$ is not dominated by $M_\a \cap \w^\w$ because an unbounded real is added at stage $\xi$ of the iteration for some $\xi \in (\g_\a,\g_{\a+1}]$. 
That 
$\MH(\BB_\mu)$ holds now follows from Lemma~\ref{lem:RandomLemma}. 
\end{proof}

By convention, ``the random model'' means any model obtained from a model of \ch after adding $\aleph_2$ random reals. But the ``the'' in this name is misleading, because some properties of the forcing extension may depend on precisely which model of \ch we started with. 
An unpublished result of Kunen shows that if we begin with a model of \ch, then force to add $\aleph_1$ Cohen reals, and then force to add $\geq\!\aleph_2$ random reals, then we get a model with $P$-points. 
Thus it is consistent that ``the'' random model contains $P$-points. 
(Another result along these lines was obtained by the third author in \cite{Dow}: adding $\aleph_2$ random reals to a model of $\ch+\square_{\w_1}$ gives a model with $P$-points.) 
However, as mentioned in the introduction, it is an important open problem whether adding random reals to \emph{any} model of \ch produces a model with $P$-points. 
The following corollary to the previous two theorems contains Kunen's result as a special case, and shows that in fact many  forcings can be used to produce models $V$ of \ch such that ``the'' random model built from $V$ contains $P$-points.

\begin{corollary}
Suppose $\PP$ is a forcing iteration (with any support) of length $\lambda$, where $\mathrm{cf}(\lambda) > \w$, and suppose $\PP$ adds unbounded reals at cofinally many stages of the iteration. 
Then forcing with $\PP * \dot \BB_\mu$ produces a model with pathways (consequently, a model where $\Delta$ holds and $P$-points exist).
\end{corollary}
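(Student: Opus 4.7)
The plan is to chain together Theorem~\ref{thm:RandomPoset} and Theorem~\ref{thm:AlmostAllCCC}, using the standard fact that a two-step iteration $\PP * \dot\BB_\mu$ produces the same generic extension as first forcing with $\PP$ to reach an intermediate model $V[G]$, and then forcing with $\BB_\mu$ over $V[G]$. So I would begin by letting $G$ be $\PP$-generic over $V$ and passing to $V[G]$.

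In $V[G]$, the hypotheses of Theorem~\ref{thm:RandomPoset} are satisfied by our assumption on $\PP$, so $\MH(\BB_\mu)$ holds in $V[G]$. Since $\BB_\mu$ is ccc, Theorem~\ref{thm:AlmostAllCCC} applies in $V[G]$ and tells us that any $\BB_\mu$-generic extension of $V[G]$ contains a pathway. But such an extension is exactly a $\PP * \dot\BB_\mu$-generic extension of $V$, so pathways exist in the final model. The parenthetical ``consequently'' clause then follows from the two earlier theorems of Section~\ref{sec:P}: pathways imply $\Delta$, and by Theorem~\ref{thm:Ppoints} pathways imply the existence of $P$-points in $\w^*$.

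There is essentially no obstacle here beyond verifying the composition: $\BB_\mu$ is ccc, so Theorem~\ref{thm:AlmostAllCCC} is directly applicable, and $\MH(\BB_\mu)$ is preserved from $V[G]$ to the relevant hypothesis used by that theorem (which is about the ground model at the moment of forcing with $\BB_\mu$, which is $V[G]$). No separate argument is needed about preservation of properties from $V$ to $V[G]$ beyond what is already built into Theorem~\ref{thm:RandomPoset}.
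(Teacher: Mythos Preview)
Your proposal is correct and follows exactly the approach the paper takes: the paper's proof is the one-line statement ``This follows directly from Theorems~\ref{thm:AlmostAllCCC} and \ref{thm:RandomPoset},'' and you have simply unpacked that composition in the natural way. The only very minor imprecision is the phrase ``the hypotheses of Theorem~\ref{thm:RandomPoset} are satisfied in $V[G]$''; strictly speaking, the hypotheses of that theorem concern $\PP$ in $V$, and its conclusion is that $\MH(\BB_\mu)$ holds in $V[G]$---but your intent is clear and the argument is sound.
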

\begin{proof}
This follows directly from Theorems~\ref{thm:AlmostAllCCC} and \ref{thm:RandomPoset}.
\end{proof}

Note that every finite support iteration of nontrivial forcings (of the appropriate length) satisfies the hypotheses of this corollary. This is because finite support iterations of nontrivial forcings add Cohen reals at limit stages of countable cofinality.

\begin{question}
Does \ch imply $\MH(\BB_{\w_2})$?
\end{question}

\noindent A positive answer to this question would imply that there are $P$-points in the random model (all of them). 

In \cite{RW}, Roitman and Williams ask whether $\Delta$ holds in the model obtained by adding $\aleph_3$ random reals to the $\aleph_2$-Cohen model, i.e., the model obtained by forcing with $\CC_{\w_2} * \BB_{\w_3}$ over a model of \ch. 
This was seen to be the simplest model for which it was unknown whether $\Delta$ holds (in part because $\bdd = \aleph_1 < \dom = \aleph_2 < \continuum = \aleph_3$ in this model). 

\begin{corollary}
$\Delta$ holds in any model obtained by forcing with $\CC_{\w_2} * \BB_{\w_3}$.
\end{corollary}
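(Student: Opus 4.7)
The plan is to apply the previous corollary directly, with $\PP = \CC_{\w_2}$ and $\mu = \w_3$; this reduces the statement to verifying two simple hypotheses on the Cohen part of the two-step iteration.

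First I would note that $\CC_{\w_2}$, the standard poset for adding $\aleph_2$ Cohen reals, is (equivalent to) a finite-support iteration of length $\lambda = \w_2$ of nontrivial Cohen forcings. Since $\mathrm{cf}(\w_2) = \w_2 > \w$, the cofinality hypothesis of the previous corollary is immediate.

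Next I would verify that unbounded reals are added at cofinally many stages of this iteration. This is exactly the content of the remark immediately following the previous corollary: every finite-support iteration of nontrivial forcings adds a Cohen real at every limit stage of countable cofinality, and Cohen reals are unbounded in $\w^\w$. As the countable-cofinality ordinals below $\w_2$ are cofinal in $\w_2$, the second hypothesis is satisfied.

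With both hypotheses verified, the previous corollary produces a pathway in any $\CC_{\w_2} * \dot\BB_{\w_3}$-extension, and the earlier theorem that the existence of a pathway implies $\Delta$ then gives the conclusion. There is no serious obstacle here, as the corollary is a direct specialization of the previous one; its interest lies rather in what it resolves, namely the Roitman--Williams question for the model in which $\bdd = \aleph_1 < \dom = \aleph_2 < \continuum = \aleph_3$, where neither of the $\zfc$ shortcuts $\bdd = \dom$ nor $\dom = \continuum$ that previously delivered $\Delta$ is available.
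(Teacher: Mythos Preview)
Your proposal is correct and follows essentially the same approach as the paper: view $\CC_{\w_2}$ as a length-$\w_2$ finite-support iteration adding Cohen (hence unbounded) reals cofinally often, and apply the previous corollary with $\mu = \w_3$. The paper's proof is a one-line version of exactly this, noting simply that $\CC_{\w_2}$ is a length-$\w_2$ finite-support iteration adding Cohen reals at every stage.
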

\begin{proof}
One may view $\CC_{\w_2}$ as a length-$\w_2$ finite support iteration that adds Cohen reals at every stage.
\end{proof}

Lastly, let us consider an arbitrary ccc poset $\PP$. 

\begin{question}\label{q:a}
Suppose \ch holds in the ground model and $\forces_\PP \dom < \aleph_\w$. Does $\PP$ forces that $\Delta$ holds?
\end{question}

To approach this question, let us suppose $G$ be a $\PP$-generic filter over $V$. Let $\dom = \dom^{V[G]}$, the dominating number of the extension. (The dominating number of the ground model is $\aleph_1$, because \ch holds). 
If $\dom = \aleph_1$ then $\bdd = \dom$ in $V[G]$, and consequently $\Delta$ holds. Thus in what follows, we may (and do) assume $\dom > \aleph_1$. 
Let $D = \< \dot f_\a :\, \a < \dom \>$ be a sequence of $V$-names for members of $\w^\w$ such that $\mathbf{1}_\BB \forces D$ is a dominating family.

In $V$, we can find an increasing sequence $\seq{M_\a}{\a < \dom}$ of elementary submodels of $H(\theta)$ (where $\theta$ is a sufficiently large regular cardinal) such that 
\begin{itemize}
\item[$\circ$] each $M_\a$ is countably closed (in $V$), with $\card{M_\a} < \dom$, 
\item[$\circ$] if $\mathrm{cf}(\a) > \w$, then $M_\a = \bigcup_{\xi < \a}M_\xi$, and 
\item[$\circ$] $D \in M_\a$ for all $\a < \dom$.
\end{itemize}
Note that $\dom$ has uncountable cofinality, which implies $M = \bigcup_{\a < \dom}M_\a$ is a countably closed elementary submodel of $H(\theta)$ with $\card{M} = \dom$.

Let $\PP_M = \PP \cap M$. Because $\PP$ is ccc and $M$ is countably closed, $\PP_M \embeds \PP$. Thus there is a $\PP_M$-name $\dot \Q = \PP / \PP_M$ for a Boolean algebra such that $\PP = \PP_M * \dot \Q$. Furthermore, $\forces_{\PP_M} \dot \Q$ is $\w^\w$-bounding.

In other words, we are able to factor $\PP$ into two pieces: one part $\PP_M$ that adds unbounded reals, and another part $\dot \Q$ such that $\forces_{\PP_M} \dot \Q$ is $\w^\w$-bounding. 
Now the real question behind Question~\ref{q:a} is whether $\forces_{\PP_M} \MH(\dot \Q)$.

If $\forces_{\PP_M} \MH(\dot \Q)$, then by Theorem~\ref{thm:AlmostAllCCC}, $\PP = \PP_M * \dot \Q$ forces that pathways exist, and therefore $\Delta$ holds. 
To see that this may be plausible, consider  
the posets $\PP_\a = \PP \cap M_\a$ for each $\a < \dom$, which are all completely embedded in $\PP$. 
These $\PP_\a$'s act like the initial stages of a forcing iteration, and something like the proof of Theorem~\ref{thm:RandomPoset} can then be used to show that if $\forces_{\PP_M} \dot \Q = \dot \BB_\mu$, then $\forces_{\PP_M} \MH(\dot \Q)$. 
The problem, however, is that if $\dot \Q$ is a badly behaved poset, the intermediate models arising from these $\PP_\a$ do not seem to give a witness to $\MH(\dot \Q)$ after forcing with $\PP_M$. What kind of structure do these intermediate models impose on $\dot \Q$, and is this structure enough to deduce $\Delta$ (or even pathways)? We do not yet know, and so Question~\ref{q:a} remains open for now.




\begin{thebibliography}{99}

\bibitem{BAG} H. Barriga-Acosta and P. M. Gartside, ``Monotone normality and nabla products,'' \emph{Fundamenta Mathematicae} \textbf{254} (2021), pp. 99--120.
\bibitem{BL} J. E. Baumgartner and R. Laver, ``Iterated perfect-set forcing,'' \emph{Annals of Mathematical Logic} \textbf{17} (1979), pp. 271--288.
\bibitem{Blass} A. Blass, ``Combinatorial cardinal characteristics of the continuum,'' in \emph{Handbook of Set Theory,} eds. M. Foreman and A. Kanamori, Springer-Verlag (2010), pp. 395--489. 
\bibitem{CG} D. Chodounsk\'y and O. Guzm\'an, ``There are no $P$-points in Silver extensions,'' \emph{Israel Journal of Mathematics} \textbf{232} (2019), pp. 759--773.
\bibitem{Cohen} Paul E. Cohen, ``$P$-points in random universes,'' \emph{Proceedings of the American Mathematical Society} \textbf{74} (1979), pp. 318--321.
\bibitem{Dow} A. Dow, ``$P$-filters and Cohen, random, and Laver forcing,'' \emph{Topology and Its Applications} \textbf{281} (2020), article no. 107200.
\bibitem{vanDouwen} E. K. van Douwen, ``Covering and separation properties of box products,'' \emph{Surveys in General Topology}, Elsevier (1980), pp. 55--129.
\bibitem{FB} D. J. Fern\'andez-Bret\'on, ``Generalized pathways,'' unpublished manuscript available at \texttt{https://arxiv.org/abs/1810.06093}.
\bibitem{FBH} D. J. Fern\'andez-Bret\'on and M. Hru\v{s}\'ak, ``Gruff ultrafilters,'' \emph{Topology and Its Applications} \textbf{210} (2016), pp. 355--365.
\bibitem{GQ} S. Geschke and S. Quickert, ``On Sacks forcing and the Sacks property,'' in \emph{Classical and New Paradigms of Computation and their Complexity Hierarchies}, eds. B. L\"{o}we et al., Trends in Logic \textbf{23} (2007), pp. 95--139.
\bibitem{Ketonen} J. Ketonen, ``On the existence of $P$-points in the Stone-\v{C}ech compactification of integers,'' \emph{Fundamenta Mathematicae} \textbf{92} (1976), pp. 91--94.
\bibitem{Kunen} K. Kunen, ``Paracompactness of box products of compact spaces,'' \emph{Transactions of the American Mathematical Society} \textbf{240} (1978), pp. 307--316.
\bibitem{Laver} R. Laver, ``Products of infinitely many perfect trees,'' \emph{Journal of the London Mathematical Society} \textbf{29} (1984), pp. 385--396.
\bibitem{NZ} I. Neeman and J. Zapletal, ``Proper forcings and absoluteness in $\mathrm{L}(\R)$,'' \emph{Commentationes Mathematicae Universitatis Carolinae} \textbf{39} (1998), pp. 281--301.
\bibitem{Roitman0} J. Roitman, ``Paracompact box products in forcing extensions,'' \emph{Fundamenta Mathematicae} \textbf{102} (1979), pp. 219--228.
\bibitem{Roitman1} J. Roitman, ``More paracompact box products,'' \emph{Proceedings of the American Mathematical Society} \textbf{74} (1979), pp. 171--176.
\bibitem{Roitman2} J. Roitman, ``Paracompactness and normality in box products: old and new,'' in \emph{Set Theory and its Applications} (2011), ed. L. Babinkostova et al., Contemporary Mathematics 533, Providence, RI, pp. 157--181.
\bibitem{RW} J. Roitman and S. Williams, ``Paracompactness, normality, and related properties of topologies on infinite products,'' \emph{Topology and its Applications} \textbf{195} (2015), pp. 79--92.
\bibitem{Rudin} M. E. Rudin, ``The box product of countably many compact metric spaces,'' \emph{General Topology and its Applications} \textbf{2} (1972), pp. 293--298.
\bibitem{Shelah} S. Shelah, \emph{Proper and Improper Forcing}, $2^\mathrm{nd}$ ed., Perspectives in
Mathematical Logic (1998), Springer-Verlag, Berlin.
\bibitem{Williams} S. Williams, ``Box products,'' in \emph{Handbook of Set-Theoretic Topology} (1984), eds. K. Kunen and J. E. Vaughan, North-Holland, pp. 169--200.
\bibitem{Wimmers} E. L. Wimmers, ``The Shelah $P$-point independence theorem,'' \emph{Israel Journal of Mathematics} \textbf{43} (1982), pp. 28--48.

\end{thebibliography}
\end{document}